\newcommand{\R}{{\mat R}}
\newcommand{\C}{{\mat C}}
\newcommand{\E}{{\mat E}}
\newcommand{\ds}{\displaystyle}
\newcommand{\no}{\nonumber}
\newcommand{\be}{\begin{equation}\begin{aligned}}
\newcommand{\ben}{\begin{equation*}\begin{aligned}}
\newcommand{\en}{\end{aligned}\end{equation}}
\newcommand{\enn}{\end{aligned}\end{equation*}}
\newcommand{\ba}{\backslash}
\newcommand{\pa}{\partial}
\newcommand{\ov}{\overline}
\newcommand{\Rt}{{\rm Re}}
\newcommand{\curl}{{\rm curl}}
\newcommand{\Curl}{{\rm Curl}}
\newcommand{\Dive}{{\rm Div}}
\newcommand{\g}{\gamma}
\newcommand{\G}{\Gamma}
\newcommand{\vep}{\varepsilon}
\newcommand{\Om}{\Omega}
\newcommand{\om}{\omega}
\newcommand{\sig}{\sigma}
\newcommand{\Sig}{\Sigma}
\newcommand{\al}{\alpha}
\newcommand{\wih}{\widehat}
\newcommand{\ti}{\times}
\newcommand{\wit}{\widetilde}
\newcommand{\ra}{\rightarrow}
\newcommand{\na}{\nabla}
\newcommand{\mat}{\mathbb}
\newcommand{\se}{\setminus}
\newcommand{\ify}{\infty}
\newcommand{\la}{\lambda}
\newcommand{\les}{\lesssim}
\newcommand{\ch}{\check}
\newcommand{\V}{\Vert}
\newcommand{\diag}{{\rm diag}}
\newcommand{\n}{\bm{n}}
\newcommand{\0}{\bm{0}}
\newcommand{\on}{{\rm on}}
\newcommand{\gin}{{\rm in}}
\newcommand{\PML}{{\rm PML}}
\newtheorem{theorem}{Theorem}[section]
\newtheorem{lemma}[theorem]{Lemma}
\newtheorem{remark}[theorem]{Remark}
\begin{document}
\renewcommand{\theequation}{\arabic{section}.\arabic{equation}}

\title{\bf Convergence of the uniaxial PML method for time-domain electromagnetic scattering problems}
\author{Changkun Wei\thanks{Research Institute of Mathematics,
Seoul National University, Seoul, 08826, Republic of Korea ({\tt ckun.wei@snu.ac.kr})}
\and
Jiaqing Yang\thanks{School of Mathematics and Statistics, Xi'an Jiaotong University,
Xi'an, Shaanxi, 710049, China ({\tt jiaq.yang@xjtu.edu.cn})}
\and
Bo Zhang\thanks{NCMIS, LSEC and Academy of Mathematics and Systems Science, Chinese Academy of Sciences,
Beijing, 100190, China and School of Mathematical Sciences, University of Chinese Academy of Sciences,
Beijing 100049, China ({\tt b.zhang@amt.ac.cn})}
}
\date{}
\maketitle


\begin{abstract}
In this paper, we propose and study the uniaxial perfectly matched layer (PML) method
for three-dimensional time-domain electromagnetic scattering problems, which has a great advantage
over the spherical one in dealing with problems involving anisotropic scatterers.
The truncated uniaxial PML problem is proved to be well-posed and stable, based on the Laplace transform technique
and the energy method. Moreover, the $L^2$-norm and $L^{\ify}$-norm error estimates in time are given
between the solutions of the original scattering problem and the truncated PML problem, leading to
the exponential convergence of the time-domain uniaxial PML method in terms of the thickness and absorbing
parameters of the PML layer. The proof depends on the error analysis between the EtM operators for the original
scattering problem and the truncated PML problem, which is different from our previous work
(SIAM J. Numer. Anal. 58(3) (2020), 1918-1940).

\vspace{.2in}
{\bf Keywords:} Well-posedness, stability, time-domain electromagnetic scattering, uniaxial PML,
exponential convergence
\end{abstract}

\section{Introduction}\label{intro}
\setcounter{equation}{0}

This paper is concerned with the time-domain electromagnetic scattering by a perfectly conducting obstacle
which is modeled by the exterior boundary value problem:
\begin{subnumcases}{}\label{maxwell1}
  \ds\na\times\bm{E} +\mu\pa_t\bm{H}=\0 & $\gin\;\;\;(\R^3\ba\ov{\Om})\times(0,T)$,\\
  \label{maxwell2}
  \ds\na\times\bm{H} -\vep\pa_t\bm{E} =\bm J & $\gin\;\;\;(\R^3\ba\ov{\Om})\times(0,T)$,\\
  \label{boundary}
  \ds\n\times\bm{E} =\0 & $\on\;\;\;\G\times(0,T)$,\\
  \label{ititial}
  \ds \bm E(x,0) =\bm H(x,0) = \0 & $\gin\;\;\;\R^3 \ba \ov{\Om}$,\\
  \label{smrc}
  \ds\hat{x}\times\left(\pa_t\bm E\times\hat{x}\right)+\hat{x}\times\pa_t\bm{H}
  = o\left(|x|^{-1}\right)\;\; & ${\rm as}\;\;\;|x|\ra\infty,\;\;t\in(0,T)$.
\end{subnumcases}
Here, $\bm E$ and $\bm H$ denote the electric and magnetic fields, respectively, $\Om\subset\R^3$
is a bounded Lipschitz domain with boundary $\G$ and $\n$ is the unit outer normal vector to $\G$.
Throughout this paper, the electric permittivity $\vep$ and the magnetic permeability $\mu$ are
assumed to be positive constants. Equation \eqref{smrc} is the well-known Silver-M\"{u}ller
radiation condition in the time domain with $\hat{x}:={x}/{|x|}$.

Time-domain scattering problems have been widely studied recently due to their capability of
capturing wide-band signals and modeling more general materials and nonlinearity,
including their mathematical analysis
(see, e.g., \cite{BGL2018,Chen2014,LLA2015,GL2016,GL2017,GLZ2017,Hsiao2015,wy2019,wyz-amas-2020}
and the references quoted there).
The well-posedness and stability of solutions to the problem \eqref{maxwell1}-\eqref{smrc}
have been proved in \cite{chen2008} by employing an exact transparent boundary condition (TBC)
on a large sphere. Recently, a spherical PML method has been proposed in \cite{wyz-sinum-2020} to solve
the problem \eqref{maxwell1}-\eqref{smrc} efficiently, based on the real coordinate stretching technique
associated with $[\Rt(s)]^{-1}$ in the Laplace transform domain with the Laplace transform variable
$s\in\C_+:=\{s=s_1+is_2\in\C:\;s_1>0,\;s_2\in\R\}$, and its exponential convergence has also been established
in terms of the thickness and absorbing parameters of the PML layer.

In this paper, we continue our previous study in \cite{wyz-sinum-2020} and propose and study
the uniaxial PML method for the problem \eqref{maxwell1}-\eqref{smrc}, based on the real coordinate
stretching technique introduced in \cite{wyz-sinum-2020}, which uses a cubic domain to define the PML problem
and thus is of great advantage over the spherical one in dealing with problems involving anisotropic scatterers.
We first establish the existence, uniqueness and stability estimates of the PML problem by
the Laplace transform technique and the energy argument and then prove the exponential convergence in both
the $L^2$-norm and the $L^{\ify}$-norm in time of the time-domain uniaxial PML method.
Our proof for the $L^2$-norm convergence follows naturally from the error estimate between the EtM operators
for the original scattering problem and its truncated PML problem established also in the paper,
which is different from \cite{wyz-sinum-2020}.
The $L^{\ify}$-norm convergence is obtained directly from the time-domain variational formulation
of the original scattering problem and its truncated PML problem with using special test functions.

The PML method was first introduced in the pioneering work \cite{Berenger1994} of B\'erenger in 1994
for efficiently solving the time-dependent Maxwell's equations. Its idea is to surround the computational
domain with a specially designed medium layer of finite thickness in which the scattered waves decay rapidly
regardless of the wave incident angle, thereby greatly reducing the computational complexity of the scattering
problem. Since then, various PML methods have been developed and studied in the literature
(see, e.g., \cite{Bramble2007,chew1994,Collino1998,DeHoop2001,Hagstrom1999,TC2001,DJ2006}
and the references quoted there). Convergence analysis of the PML method has also been widely studied for
time-harmonic acoustic, electromagnetic, and elastic wave scattering problems.
For example, the exponential convergence has been established in terms of the thickness of the PML layer
in \cite{Lassas1998,HSZ2003,BW2005,CL2005,Bramble2007,CC2007,BPT2010,CZ2017} for the circular or spherical
PML method and in \cite{CW2008,BP2008,CZ2010,BP2012,BP2013,CXZ2016,CCZ2013} for the uniaxial (or Cartesian)
PML method. Among them, the proof in \cite{BW2005} is based on the error estimate between
the electric-to-magnetic (EtM) operators for the original electromagnetic scattering problem and
its truncated PML problem, while the key ingredient of the proof in \cite{CC2007} and \cite{CCZ2013}
is the decay property of the PML extensions defined by the series solution and the integral representation
solution, respectively.
On the other hand, there are also several works on convergence analysis of the time-domain PML method for
transient scattering problems. For two-dimensional transient acoustic scattering problems, the exponential
convergence was proved in \cite{chen2009} for the circular PML method and in \cite{CW2012} for the uniaxial
PML method, based on the complex coordinate stretching technique.
For the 3D time-domain electromagnetic scattering problem \eqref{maxwell1}-\eqref{smrc},
the spherical PML method was proposed in \cite{wyz-sinum-2020} based on the real coordinate stretching technique
associated with $[\Rt(s)]^{-1}$ in the Laplace transform domain with the Laplace transform variable $s\in\C_+$,
and its exponential convergence was established by means of the energy argument and the exponential decay
estimates of the stretched dyadic Green's function for the Maxwell equations in the free space.
In addition, we refer to \cite{BGL2018} for the well-posedness and stability estimates of the time-domain PML method
for the two-dimensional acoustic-elastic interaction problem, and to \cite{wyz-cms-2020} for the convergence
analysis of the PML method for the fluid-solid interaction problem above an unbounded rough surface.

The remaining part of this paper is as follows. In Section \ref{sec_fs}, we introduce some basic Sobolev spaces
needed in this paper. In Section \ref{sec_wp}, the well-posedness of the time-domain electromagnetic scattering
problem is presented, and some important properties are given for the transparent boundary condition (TBC)
in the Cartesian coordinate.
In Section \ref{sec_upml}, we propose the uniaxial PML method in the Cartesian coordinate,
study the well-posedness of the truncated PML problem and establish its exponential convergence.
Some conclusions are given in Section \ref{sec_conclusion}.

\section{Functional spaces}\label{sec_fs}

We briefly introduce the Sobolev space $H(\curl,\cdot)$ and its related trace spaces which are used
in this paper. For a bounded domain $D\subset\R^3$ with Lipschitz continuous boundary $\Sigma$,
the Sobolev space $H(\curl,D)$ is defined by
\ben
H(\curl,D):=\{\bm{u}\in L^2(D)^3:\;\na\times\bm{u}\in L^2(D)^3\}
\enn
which is a Hilbert space equipped with the norm
\ben
\|\bm{u}\|_{H(\curl,D)}=\left(\|\bm{u}\|^2_{L^2(D)^3}+\|\na\times\bm{u}\|^2_{L^2(D)^3}\right)^{1/2}.
\enn

Denote by $\bm{u}_{\Sig}=\n\times(\bm{u}\times\bm{n})|_\Sig$ the tangential component of $\bm u$
on $\Sig$, where $\bm n$ denotes the unit outward normal vector on $\Sig$.
By \cite{BCS2002} we have the following bounded and surjective trace operators:
\ben
& \g:\;H^1(D)\ra H^{1/2}(\Sig),\quad \g\varphi = \varphi\quad\on\;\;\Sig,\\
& \g_t:\;H(\curl,D)\ra H^{-1/2}(\Dive,\Sig),\quad\g_t\bm{u}=\bm{u}\times\n\quad\on\;\;\Sig,\\
& \g_T:\;H(\curl,D)\ra H^{-1/2}(\Curl,\Sig),\quad\g_T\bm{u}=\n\times(\bm{u}\times\n)\quad\on\;\;\Sig,
\enn
where $\g_t$ and $\g_T$ are known as the tangential trace and tangential components trace operators,
and $\Dive$ and $\Curl$ denote the surface divergence and surface scalar curl operators,
respectively (for the detailed definition of $H^{-1/2}(\Dive,\Sig)$ and $H^{-1/2}(\Curl,\Sig)$,
we refer to \cite{BCS2002}). By \cite{BCS2002} again we know that $H^{-1/2}(\Dive,\Sig)$ and
$H^{-1/2}(\Curl,\Sig)$ form a dual pairing satisfying the integration by parts formula
\be\label{curl_form}
(\bm{u},\na\times\bm{v})_{D}-(\na\times\bm{u},\bm{v})_{D}
=\left\langle\g_{t}\bm{u},\g_{T}\bm{v}\right\rangle_{\Sig}\quad\forall\;\bm{u},\bm{v}\in\bm{H}(\curl,D),
\en
where $(\cdot,\cdot)_{D}$ and $\langle\cdot,\cdot\rangle_{\Sig}$ denote the $L^2$-inner product
on $D$ and the dual product between $H^{-1/2}(\Dive,\Sig)$ and $H^{-1/2}(\Curl,\Sig)$, respectively.

For any $S\subset\Sig$, the subspace with zero tangential trace on $S$ is denoted as
\ben
H_S(\curl,D):=\left\{\bm{u} \in H(\curl,D):\g_{t}\bm{u} = 0\;\;\on\;\;S\right\}.
\enn
In particular, if $S=\Sig$ then we write $H_{0}(\curl,D):=H_{\Sig}(\curl,D)$.

\section{The well-posedness of the scattering problem}\label{sec_wp}

Let $\Om$ be contained in the interior of the cuboid $B_1:=\{x=(x_1,x_2,x_3)^{\top}\in\R^3:|x_j|<L_j/2,j=1,2,3\}$
with boundary $\G_1=\pa B_1$. Denote by $\n_1$ the unit outward normal to $\G_1$.
The computational domain $B_1\ba\ov{\Om}$ is denoted by $\Om_1$. In this section, we assume that
the current density $\bm J$ is compactly supported in $B_1$ with
\be\label{assump_upml}
\bm J\in H^{10}(0,T;L^2(\Om_1)^3),\;\;\;\pa_t^j\bm J|_{t=0}=0,\;\;j=0,1,2,3,\dots9
\en
and that $\bm J$ is extended so that
\be\label{assump1_upml}
\bm J\in H^{10}(0,\infty;L^2(\Om_1)^3),\;\;\|\bm J\|_{H^{10}(0,\infty;L^2(\Om_1)^3)}
\le C\|\bm J\|_{H^{10}(0,T;L^2(\Om_1)^3)}.
\en

Define the following time-domain transparent boundary condition (TBC) on $\G_1$:
\be\label{tbc}
\mathscr{T}[\bm{E}_{\G_1}]=\bm{H}\times\n_1\quad\;\on\;\;\;\G_1\times(0,T)
\en
which is essentially an electric-to-magnetic (EtM) Calder\'on operator.
Then the original scattering problem \eqref{maxwell1}-\eqref{smrc} can be equivalently
reduced into the initial boundary value problem in a bounded domain $\Om_1\times(0,T)$:
\begin{equation}\label{reduced}
  \begin{cases}
    \na\times\bm{E} +\mu\pa_t\bm{H} = \0 & \gin\;\;\;\Om_1\times(0,T),\\
    \na\times\bm{H} -\vep\pa_t\bm{E} = \bm J & \gin\;\;\;\Om_1\times(0,T),\\
    \n\times\bm{E} =\0 & \on\;\;\;\G\times(0,T),\\
    \bm E(x,0)=\bm H(x,0) = \0 & \gin\;\;\;\Om_1,\\
    \mathscr{T}[\bm E_{\G_1}]=\bm H\times\n_1 & \on\;\;\;\G_1\times(0,T).
  \end{cases}
\end{equation}
The well-posedness of the original scattering problem \eqref{maxwell1}-\eqref{smrc} has been established
in \cite{chen2008} by using the transparent boundary condition on a sphere. Thus the problem \eqref{reduced}
is also well-posed since it is equivalent to the problem \eqref{maxwell1}-\eqref{smrc}.
However, for convenience of the subsequent use in the following sections, we study the problem \eqref{reduced}
directly by studying the property of the EtM operator $\mathscr{T}$.
For any $s\in\C_+:=\{s=s_1+is_2\in\C:\; s_1>0,\;s_2\in\R\}$ let
\ben
\ch{\bm{E}}(x,s) &= \mathscr{L}(\bm{E})(x,s)=\int_0^{\infty} e^{-st}\bm{E}(x,t) dt, \\
\ch{\bm{H}}(x,s) &= \mathscr{L}(\bm{H})(x,s)=\int_0^{\infty} e^{-st}\bm{H}(x,t) dt
\enn
be the Laplace transform of $\bm{E}$ and $\bm{H}$ with respect to time $t$, respectively
(for extensive studies on the Laplace transform, the reader is referred to \cite{Cohen2007}).
Let $\mathscr{B}: H^{-1/2}(\Curl,\G_1)\to H^{-1/2}(\Dive,\G_1)$ be the EtM operator
\be\label{s_etm}
\mathscr{B}[\ch{\bm{E}}_{\G_1}]=\ch{\bm{H}}\times\n_1\quad\on\;\;\;\G_1,
\en
where $\ch{\bm{E}}$ and $\ch{\bm{H}}$ satisfy the exterior Maxwell's equation in the Laplace domain
\be\label{s_exterior_u}
\begin{cases}
\na\times\ch{\bm{E}}+\mu s\ch{\bm{H}} =\0 \quad & \gin\;\;\;\R^3\ba\ov{B}_1,\\
\na\times\ch{\bm{H}}-\vep s\ch{\bm{E}}=\0 \quad & \gin\;\;\;\R^3\ba\ov{B}_1,\\
\hat{x}\times(\ch{\bm{E}}\times\hat{x})+\hat{x}\times\ch{\bm{H}}
=o\left(\frac{1}{|x|}\right)\quad & {\rm as} \;\;\; |x|\ra\infty.
\end{cases}
\en
It is obvious that $\mathscr{T}=\mathscr{L}^{-1}\circ\mathscr{B}\circ\mathscr{L}$.
For each $s\in\C_+$ it is known that, by the Lax-Milgram theorem the problem \eqref{s_exterior_u}
has a unique solution $(\ch{\bm{E}},\ch{\bm{H}})\in H(\curl,\R^3\ba\ov{B}_1)$ .
Thus the operator $\mathscr{B}$ is a well-defined, continuous linear operator.

\begin{lemma}\label{lem_etm}
For each $s\in \C_+$, $\mathscr{B}: H^{-1/2}(\Curl,\G_1)\to H^{-1/2}(\Dive,\G_1)$ is bounded
with the estimate
\be\label{bound_etm}
\|\mathscr{B}\|_{L(H^{-1/2}(\Curl,\G_1),H^{-1/2}(\Dive,\G_1))}\les |s|^{-1} + |s|,
\en
where $L(X,Y)$ denotes the standard space of bounded linear operators from the Hilbert space $X$
to the Hilbert space $Y$. Further, we have
\be\label{positive_dtn}
\Rt\langle\mathscr{B}\bm\om,\bm\om\rangle_{\G_1}\geq 0\;\;\;
\text{for any}\;\;\bm\om\in H^{-1/2}(\Curl,\G_1),
\en
where $\langle\cdot\rangle_{\G_1}$ denotes the dual product between $H^{-1/2}(\Dive,\G_1)$
and $H^{-1/2}(\Curl,\G_1)$.
\end{lemma}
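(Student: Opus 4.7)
\emph{Plan.} My plan is to eliminate $\ch{\bm H}$ in \eqref{s_exterior_u} via $\ch{\bm H}=-(\mu s)^{-1}\na\times\ch{\bm E}$, so that $\ch{\bm E}$ satisfies a second-order curl--curl equation in the unbounded exterior $\R^3\ba\ov B_1$ with tangential trace $\g_T\ch{\bm E}=\bm\om$ on $\G_1$, and then to derive both \eqref{bound_etm} and \eqref{positive_dtn} from the sesquilinear form
\begin{equation*}
a(\bm u,\bm v)=\bigl((\mu s)^{-1}\na\times\bm u,\,\na\times\bm v\bigr)_{\R^3\ba\ov B_1}+\bigl(\vep s\,\bm u,\,\bm v\bigr)_{\R^3\ba\ov B_1}.
\end{equation*}
For $s\in\C_+$ the Silver--M\"uller condition in \eqref{s_exterior_u} yields exponential decay of $\ch{\bm E},\ch{\bm H}$ as $|x|\to\infty$, which lets me work in the ordinary $H(\curl)$ space on the full exterior and justifies discarding the surface integrals at infinity in the integration by parts below.

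\emph{Proof of \eqref{positive_dtn}.} Given any $\bm\om\in H^{-1/2}(\Curl,\G_1)$, let $(\ch{\bm E},\ch{\bm H})$ be the solution of \eqref{s_exterior_u} with $\g_T\ch{\bm E}|_{\G_1}=\bm\om$. Applying \eqref{curl_form} to the pair $(\ch{\bm H},\ch{\bm E})$ on the shell $B_R\ba\ov B_1$, and noting that the exterior outward normal on $\G_1$ is $-\n_1$, I pass to the limit $R\to\infty$ and substitute the Laplace-domain equations to obtain
\begin{equation*}
\langle\mathscr{B}\bm\om,\bm\om\rangle_{\G_1}
=\mu\bar s\,\|\ch{\bm H}\|_{L^2(\R^3\ba\ov B_1)^3}^{2}
+\vep s\,\|\ch{\bm E}\|_{L^2(\R^3\ba\ov B_1)^3}^{2},
\end{equation*}
whose real part equals $s_1\bigl(\mu\|\ch{\bm H}\|^2+\vep\|\ch{\bm E}\|^2\bigr)\ge 0$, which is \eqref{positive_dtn}.

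\emph{Proof of \eqref{bound_etm}.} By surjectivity of $\g_T$ I would lift $\bm\om$ to a compactly supported $\ch{\bm E}_0\in H(\curl,\R^3\ba\ov B_1)$ with $\|\ch{\bm E}_0\|_{H(\curl)}\les\|\bm\om\|_{H^{-1/2}(\Curl,\G_1)}$, set $\ch{\bm E}_1:=\ch{\bm E}-\ch{\bm E}_0\in H_0(\curl,\R^3\ba\ov B_1)$, and solve the variational problem $a(\ch{\bm E}_1,\bm v)=-a(\ch{\bm E}_0,\bm v)$ for all $\bm v\in H_0(\curl,\R^3\ba\ov B_1)$ by Lax--Milgram. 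In the $s$-weighted norm $\|\bm u\|_s^2:=|s|^2\|\bm u\|_{L^2}^2+\|\na\times\bm u\|_{L^2}^2$, the form $a$ is continuous with constant $\sim 1/|s|$ and, since $\Rt((\mu s)^{-1})=s_1/(\mu|s|^2)$ and $\Rt(\vep s)=\vep s_1$ are both positive, coercive with constant $\sim s_1/|s|^2$, producing an energy bound of the form $\|\ch{\bm E}\|_s\les(|s|/s_1)\|\bm\om\|_{H^{-1/2}(\Curl,\G_1)}$. Finally, $\ch{\bm H}\times\n_1=-(\mu s)^{-1}(\na\times\ch{\bm E})\times\n_1$ together with the continuity of $\g_t:H(\curl)\to H^{-1/2}(\Dive)$ and the elementary bounds $\|\na\times\ch{\bm E}\|_{L^2}\le\|\ch{\bm E}\|_s$, $|s|\,\|\ch{\bm E}\|_{L^2}\le\|\ch{\bm E}\|_s$ convert the energy estimate into the desired $(|s|^{-1}+|s|)$-dependence in \eqref{bound_etm}.

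\emph{Expected obstacle.} The main technical point is the careful bookkeeping of $|s|$-powers through the weighted Lax--Milgram estimate and the $H(\curl)$-trace inequality, so that they collapse exactly to the factor $|s|^{-1}+|s|$ rather than a worse polynomial; the choice of weighting in $\|\cdot\|_s$ is critical for continuity and coercivity to balance correctly. A secondary concern is a clean justification of the passage $R\to\infty$ in the integration by parts, for which one must extract explicit exponential decay of $\ch{\bm E},\ch{\bm H}$ from \eqref{s_exterior_u} (for instance through the stretched dyadic Green's function referenced in the introduction) when $\Rt(s)>0$.
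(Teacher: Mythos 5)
Your argument for \eqref{positive_dtn} is essentially the paper's: integrate by parts on $B_R\ba\ov B_1$, take real parts, and let $R\to\infty$; your identity $\langle\mathscr{B}\bm\om,\bm\om\rangle_{\G_1}=\mu\bar s\|\ch{\bm H}\|^2+\vep s\|\ch{\bm E}\|^2$ is the limit form of the paper's \eqref{integral} (note $(\mu s)^{-1}|\na\times\ch{\bm E}|^2=\mu\bar s|\ch{\bm H}|^2$), so the signs check out. The one difference is the sphere term: you discard it by invoking exponential decay of $(\ch{\bm E},\ch{\bm H})$, which is true for $\Rt(s)>0$ but is left unproven in your write-up, whereas the paper completes the square so that the $\pa B_R$ contribution enters only through $\tfrac12\|\hat{x}\times(\ch{\bm E}\times\hat{x})-\hat{x}\times(\mu s)^{-1}\na\times\ch{\bm E}\|^2_{L^2(\pa B_R)^3}$, i.e.\ exactly the Silver--M\"uller combination, which vanishes by the radiation condition as stated (alternatively, since the Lax--Milgram solution is globally in $H(\curl,\R^3\ba\ov B_1)$, one may pass to the limit along a sequence of radii on which the flux term vanishes). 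So this half is fine modulo a fixable justification.

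The genuine gap is in \eqref{bound_etm}. Your pipeline --- lift $\bm\om$, solve by weighted Lax--Milgram, then bound $\mathscr{B}\bm\om=\g_t\bigl(-(\mu s)^{-1}\na\times\ch{\bm E}\bigr)$ via the continuity of $\g_t:H(\curl)\to H^{-1/2}(\Dive)$ --- does not collapse to $|s|^{-1}+|s|$. Tracking your own constants: the lifting obeys $\|\ch{\bm E}_0\|_s\les(1+|s|)\|\bm\om\|_{H^{-1/2}(\Curl,\G_1)}$ (not $\|\bm\om\|$, because of the weight $|s|^2$ on the $L^2$ part), Lax--Milgram gives $\|\ch{\bm E}\|_s\les(|s|/s_1)\|\ch{\bm E}_0\|_s$, and the trace step costs $\|(\mu s)^{-1}\na\times\ch{\bm E}\|_{H(\curl)}\les(1+|s|^{-1})\|\ch{\bm E}\|_s$ because $\na\times[(\mu s)^{-1}\na\times\ch{\bm E}]=-\vep s\ch{\bm E}$. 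Multiplying these out yields $\|\mathscr{B}\bm\om\|_{H^{-1/2}(\Dive,\G_1)}\les s_1^{-1}(1+|s|)^2\|\bm\om\|$, which grows like $|s|^2$ for large $|s|$ and carries a spurious $s_1^{-1}$; even your (unjustified) intermediate claim $\|\ch{\bm E}\|_s\les(|s|/s_1)\|\bm\om\|$ would only give $s_1^{-1}(1+|s|)$. These losses matter, since the convergence proof later tracks every power of $|s|$ and of $s_1^{-1}=T$. The paper proves the bound differently, by duality: for arbitrary $\bm V\in H(\curl,\R^3\ba\ov B_1)$ the weak formulation gives
\ben
\langle\mathscr{B}[\ch{\bm E}_{\G_1}],\g_T\bm V\rangle_{\G_1}
=\int_{\R^3\ba\ov B_1}\left[(\mu s)^{-1}\na\times\ch{\bm E}\cdot\na\times\ov{\bm V}
+\vep s\,\ch{\bm E}\cdot\ov{\bm V}\right]dx,
\enn
and estimating this pairing directly (together with the surjectivity of $\g_T$ with a bounded right inverse) reads the factor $|s|^{-1}+|s|$ straight off the coefficients $(\mu s)^{-1}$ and $\vep s$, with the $H(\curl)$-norm of the exterior solution absorbed into the hidden constant; no trace theorem is ever applied to $\na\times\ch{\bm E}$, which is precisely where your route pays the extra power of $|s|$. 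To repair your proof, replace the trace-of-the-curl step by this duality estimate.
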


\begin{proof}
First, eliminating $\ch{\bm{H}}$ from \eqref{s_exterior_u} and multiplying both sides of the resulting equation
with $\ov{\bm V}\in H(\curl,\R^3\ba\ov{B}_1)$ yield
\ben
\left|\langle\mathscr{B}[\ch{\bm{E}}_{\G_1}],\g_T{\bm V}\rangle_{\G_1}\right|
&=\left|\int_{\R^3\ba\ov{B}_1}\left[(\mu s)^{-1}\na\times\ch{\bm{E}}\cdot\na\times\ov{\bm{V}}
+\vep s\ch{\bm{E}}\cdot\ov{\bm{V}}dx\right]\right|\\
&\les(|s|^{-1}+|s|)\|\ch{\bm E}\|_{H(\curl,\R^3\ba\ov{B}_1)}\|{\bm V}\|_{H(\curl,\R^3\ba\ov{B}_1)},
\enn
which implies \eqref{bound_etm}.

Now, for any $\bm\om\in H^{-1/2}(\Curl,\G_1)$ suppose $(\ch{\bm E},\ch{\bm H})$ is the solution
to the problem \eqref{s_exterior_u} satisfying the boundary condition $\g_T\ch{\bm E}=\bm{\om}$
on $\G_1$. Let $B_R :=\{x\in\R^3: |x|<R\}$ contain the domain $B_1$.
Eliminating $\ch{\bm H}$ from \eqref{s_exterior_u} and integrating by parts the resulting equation
multiplied with $\ov{\ch{\bm E}}$ over $B_R\ba\ov{B}_1$, we obtain that
\be\label{integral}
&\int_{B_R\ba\ov{B}_1}\left((\mu s)^{-1}|\na\times\ch{\bm E}|^2+\vep s|\ch{\bm E}|^2\right)dx
-\langle\mathscr{B}\bm\om,\bm\om\rangle_{\G_1} \\
&\qquad\qquad+\int_{\pa B_R}\hat{x}\times(\mu s)^{-1}\na\times\ch{\bm E}\cdot\ov{\ch{\bm E}}d\g= 0.
\en
Taking the real part of \eqref{integral} and noting that
\ben
&\left|\hat{x}\times(\ch{\bm E}\times\hat{x})-\hat{x}\times(\mu s)^{-1}\na\times\ch{\bm E}\right|^2 \\
&\qquad=|\hat{x}\times(\ch{\bm E}\times\hat{x})|^2+|\hat{x}\times(\mu s)^{-1}\na\times\ch{\bm E}|^2
-2\Rt(\hat{x}\times(\mu s)^{-1}\na\times\ch{\bm E})\cdot\ov{\ch{\bm E}},
\enn
we have
\be\label{norm_eq}
&\frac{s_1}{\mu|s|^2}\|\na\times \ch{\bm E}\|^2_{L^2(B_R\ba \ov{B}_1)^3}
+\vep s_1\|\ch{\bm E}\|^2_{L^2(B_R\ba \ov{B}_1)^3}-\Rt\langle\mathscr{B}\bm\om,\bm\om\rangle_{\G_1}\\
&\qquad\;\;\;+\frac{1}{2}\left\|\hat{x}\times(\ch{\bm E}\times\hat{x})\right\|_{L^2(\pa B_R)^3}^2
+ \frac{1}{2}\left\|\hat{x}\times(\mu s)^{-1}\na\times\ch{\bm E}\right\|_{L^2(\pa B_R)^3}^2 \\
&\qquad=\frac{1}{2}\left\|\hat{x}\times(\ch{\bm E}\times\hat{x})-\hat{x}\times(\mu s)^{-1}
\na\times\ch{\bm E}\right\|_{L^2(\pa B_R)^3}^2.
\en
By the Silver-M\"{u}ller radiation condition \eqref{smrc} in the $s$-domain, it is known that the right-hand side
of \eqref{norm_eq} tends to zero as $R\ra\infty$.
This implies that $\Rt \langle\mathscr{B}\bm\om,\bm\om\rangle_{\G_1}\ge 0$.
The proof is thus complete.
\end{proof}

By using Lemma \ref{lem_etm} and \cite[Lemmas 4.5-4.6]{wyz-amas-2020}, the time-domain EtM operator $\mathscr{T}$
has the following positive properties which will be used in the error analysis of the time-domain PML solution.

\begin{lemma}\label{lem_positive}
Given $\xi\geq 0$ and $\bm\om(\cdot,t)\in L^2(0,\xi;H^{-1/2}(\Curl,\G_1))$ it holds that
\ben
\Rt\int_{\G_1}\int_0^\xi\left(\int_0^t\mathscr{C}[\bm\om](x,\tau)d\tau\right)\ov{\bm\om}(x,t)dtd\g\geq 0,
\enn
where $\mathscr{C}=\mathscr{L}^{-1}\circ s\mathscr{B}\circ \mathscr{L}$.
\end{lemma}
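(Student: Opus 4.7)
My plan is to reduce the claimed time-domain positivity to the Laplace-domain positivity \eqref{positive_dtn} established in Lemma \ref{lem_etm}, using the Parseval--Plancherel type machinery for Laplace transforms that is packaged in \cite[Lemmas 4.5--4.6]{wyz-amas-2020}.

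The first step is to identify the time-domain operator. Set $\bm\psi(x,t):=\int_0^t\mathscr{C}[\bm\om](x,\tau)d\tau$, so that $\pa_t\bm\psi=\mathscr{C}[\bm\om]$ with $\bm\psi(\cdot,0)=\0$. Taking the Laplace transform gives $s\ch{\bm\psi}(s)=s\mathscr{B}(s)\ch{\bm\om}(s)$, i.e.\ $\ch{\bm\psi}(s)=\mathscr{B}(s)\ch{\bm\om}(s)=\mathscr{L}(\mathscr{T}[\bm\om])(s)$. Hence $\int_0^t\mathscr{C}[\bm\om](x,\tau)d\tau=\mathscr{T}[\bm\om](x,t)$ for causal $\bm\om$, and the quantity we must estimate is
\ben
I(\xi):=\Rt\int_{\G_1}\int_0^\xi\mathscr{T}[\bm\om](x,t)\ov{\bm\om}(x,t)dt d\g.
\enn

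Next I would reduce $I(\xi)$ to an integral in the Laplace variable. Extend $\bm\om$ by zero for $t>\xi$ and call the extension $\til{\bm\om}$; since $\mathscr{T}$ is a causal convolution in $t$, this extension leaves $\mathscr{T}[\bm\om](t)$ unchanged for $t\le\xi$, and the cut-off of $\til{\bm\om}$ after $\xi$ lets us rewrite
\ben
I(\xi)=\Rt\int_{\G_1}\int_0^\infty\mathscr{T}[\til{\bm\om}](x,t)\ov{\til{\bm\om}}(x,t)dt d\g.
\enn
The Parseval-type identity from \cite[Lemma 4.5]{wyz-amas-2020} applied on $(0,\infty)$ (after inserting the standard damping $e^{-2s_1 t}$ and letting $s_1\to 0^+$ via \cite[Lemma 4.6]{wyz-amas-2020}) converts this into
\ben
I(\xi)=\lim_{s_1\to 0^+}\frac{1}{2\pi}\Rt\int_{-\infty}^{\infty}\langle\mathscr{B}(s)\ch{\til{\bm\om}}(s),\ch{\til{\bm\om}}(s)\rangle_{\G_1}ds_2,
\enn
with $s=s_1+is_2\in\C_+$.

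Finally, by the positivity statement \eqref{positive_dtn} of Lemma \ref{lem_etm}, the integrand $\Rt\langle\mathscr{B}(s)\ch{\til{\bm\om}}(s),\ch{\til{\bm\om}}(s)\rangle_{\G_1}$ is nonnegative for every $s\in\C_+$, so the inner integral is nonnegative for each $s_1>0$ and the limit inherits this sign, yielding $I(\xi)\ge 0$. The main obstacle is not the algebra but verifying that $\mathscr{B}(s)\ch{\til{\bm\om}}(s)$ and $\ch{\til{\bm\om}}(s)$ live in the right dual pair of trace spaces uniformly in $s_1\in(0,1]$ so that the Parseval identity and the passage to the limit $s_1\to 0^+$ are legitimate; this is exactly the regularity content of \cite[Lemmas 4.5--4.6]{wyz-amas-2020}, which combined with the $|s|^{-1}+|s|$ bound \eqref{bound_etm} on $\mathscr{B}$ guarantees that the Plancherel integrand is integrable on the line $\Rt s=s_1$ and behaves continuously as $s_1\downarrow 0$.
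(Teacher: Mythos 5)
Your proposal is correct and follows essentially the same route as the paper: the paper obtains Lemma \ref{lem_positive} by combining the $s$-domain positivity \eqref{positive_dtn} of Lemma \ref{lem_etm} with the abstract positivity-transfer machinery of \cite[Lemmas 4.5--4.6]{wyz-amas-2020}, which is exactly the zero-extension, causality, damped Parseval identity and $s_1\to0^+$ argument you spell out. The integrability/density issue you flag at the end is precisely what those cited lemmas are invoked to handle, so no gap remains.
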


\begin{lemma}\label{lem_positive1}
Given $\xi\ge 0$ and $\bm\om(\cdot,t)\in L^2(0,\xi;H^{-1/2}(\Curl,\G_1))$ with $\bm\om(\cdot,0)=\0$, it holds that
\ben
\Rt\int_{\G_1}\int_0^\xi\left(\int_0^t\mathscr{C}[\pa_{\tau}{\bm\om}](x,\tau)d\tau\right)
\pa_{\tau}\ov{\bm\om}(x,t)dtd\g\ge 0.
\enn
\end{lemma}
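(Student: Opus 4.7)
The plan is to reduce Lemma \ref{lem_positive1} to the previously established Lemma \ref{lem_positive} via the substitution $\bm\psi := \partial_t \bm\omega$. Assuming the implicit regularity that $\partial_t \bm\omega \in L^2(0,\xi; H^{-1/2}(\Curl, \G_1))$ (which is already needed for the conclusion of Lemma \ref{lem_positive1} to be meaningful), the function $\bm\psi$ is itself an admissible argument for Lemma \ref{lem_positive}. Applying that lemma with $\bm\omega$ replaced by $\bm\psi$ yields
\ben
\Rt\int_{\G_1}\int_0^\xi\left(\int_0^t\mathscr{C}[\bm\psi](x,\tau)\,d\tau\right)\ov{\bm\psi}(x,t)\,dt\,d\g \ge 0,
\enn
which is precisely the desired inequality after restoring $\bm\psi = \partial_\tau \bm\omega$.

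The role of the initial condition $\bm\omega(\cdot, 0) = \0$ is to ensure the consistency of this substitution with the causal Laplace-transform framework underlying $\mathscr{C} = \mathscr{L}^{-1} \circ s\mathscr{B} \circ \mathscr{L}$. Since $\mathscr{L}(\partial_t \bm\omega)(s) = s \mathscr{L}(\bm\omega)(s) - \bm\omega(\cdot, 0)$, the vanishing initial value yields $\mathscr{L}(\bm\psi)(s) = s\,\ch{\bm\omega}(s)$ with no Dirac contribution at $t = 0$. Hence $\bm\psi$, extended by zero outside $[0,\xi]$, is a bona fide causal $L^2$-function in time, and the Parseval/Plancherel-type argument behind Lemma \ref{lem_positive}---which ultimately rests on the positivity $\Rt\langle\mathscr{B}\bm\om,\bm\om\rangle_{\G_1}\ge 0$ from Lemma \ref{lem_etm}---carries over with $\ch{\bm\omega}(s)$ replaced by $s\,\ch{\bm\omega}(s)$, producing a nonnegative integrand weighted by an additional factor $|s|^2$.

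The only non-routine step is to verify this admissibility rigorously. Under the natural strengthening $\bm\omega \in H^1(0,\xi; H^{-1/2}(\Curl, \G_1))$ together with $\bm\omega(\cdot, 0) = \0$, the zero-extension of $\partial_t \bm\omega$ outside $[0,\xi]$ lies in $L^2(\R_+; H^{-1/2}(\Curl, \G_1))$, and the reduction to Lemma \ref{lem_positive} is then immediate. I expect this admissibility verification to be the main technical point; once it is secured, the remainder of the argument is purely formal bookkeeping and requires no new analytical input beyond the tools already used in the proofs of Lemma \ref{lem_etm} and Lemma \ref{lem_positive}.
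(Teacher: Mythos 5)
Your reduction is correct, and it is essentially the argument the paper intends: the paper gives no proof of Lemma \ref{lem_positive1} at all, deferring to Lemmas 4.5--4.6 of \cite{wyz-amas-2020}, where the statement is obtained by the same Parseval-plus-positivity mechanism as Lemma \ref{lem_positive}, only with $s\ch{\bm\om}$ playing the role of $\ch{\bm\om}$ --- which is exactly what your substitution $\bm\psi=\pa_t\bm\om$ encodes. Your reading of the hypothesis $\bm\om(\cdot,0)=\0$ is also the right one: it guarantees $\mathscr{L}(\pa_t\bm\om)=s\ch{\bm\om}$ without a Dirac contribution, so that $\mathscr{C}[\pa_\tau\bm\om]$ is unambiguous and the causal zero-extension of $\pa_t\bm\om$ is an admissible $L^2$ argument for Lemma \ref{lem_positive}; together with the (implicit, and in the application automatic, since $\bm\om=\bm U_{\G_1}$ there) regularity $\pa_t\bm\om\in L^2(0,\xi;H^{-1/2}(\Curl,\G_1))$, the lemma then follows by direct substitution, exactly as you say.
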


We now introduce the equivalent variational formulation in the Laplace transform domain to the problem \eqref{reduced}.
To this end, eliminate the magnetic field $\bm{H}$ and take the Laplace transform of \eqref{reduced} to get
\begin{equation}\label{s_reduced}
\begin{cases}
\na\times[(\mu s)^{-1}\na\times\ch{\bm{E}}]+\vep s\ch{\bm E} = -\ch{\bm J} & \gin\;\;\;\Om_1,\\
\n\times\ch{\bm{E}} = \0 & \on\;\;\;\G,\\
\mathscr{B}[\ch{\bm E}_{\G_1}] = -(\mu s)^{-1}\na\times\ch{\bm{E}}\times\n_1 & \on\;\;\;\G_1.
\end{cases}
\end{equation}
The variational formulation of \eqref{s_reduced} is then as follows: find a solution
$\ch{\bm E}\in H_{\G}(\curl,\Om_1)$ such that
\be\label{a}
a(\ch{\bm{E}},\bm{V})=-\int_{\Om_1}\ch{\bm J}\cdot\ov{\bm{V}}dx,\quad\forall\;\bm V\in H_{\G}(\curl,\Om_1),
\en
where the sesquilinear form $a(\cdot,\cdot)$ is defined as
\be\label{aform}
a(\ch{\bm{E}},\bm{V})=\int_{\Om_1}\left[(s\mu)^{-1}(\na\times\ch{\bm{E}})\cdot(\na\times\ov{\bm V})dx
+\vep s\ch{\bm{E}}\cdot\ov{\bm V}\right]dx +\langle\mathscr{B}[\ch{\bm E}_{\G_1}],\bm V_{\G_1}\rangle_{\G_1}.
\en
By Lemma \ref{lem_etm} it is easy to see that $a(\cdot,\cdot)$ is uniformly coercive, that is,
\begin{align}
\Rt[a(\ch{\bm{E}},\ch{\bm{E}})]& \gtrsim\frac{s_1}{|s|^2}(\|\na\times\ch{\bm E}\|_{L^2(\Om_1)^3}^2
  +\|s\ch{\bm E}\|_{L^2(\Om_1)^3}^2) \no \\ \label{coer_a}
& \ge s_1\min\{|s|^{-2},1\}\|\ch{\bm{E}}\|^2_{H(\curl,\Om_1)}.
\end{align}
Then, by the Lax-Milgram theorem the problem \eqref{s_reduced} is well-posed for each $s\in\C_+$.
Thus, and by the energy argument in conjunction with the inversion theorem of the Laplace transform
(cf. \cite{chen2008}) the well-posedness of the problem \eqref{reduced} follows.
In particular, $\mathscr{T}[\bm E_{\G_1}]\in L^2\left(0,T;H^{-1/2}(\Dive,\G_1)\right)$.

\section{The uniaxial PML method}\label{sec_upml}

In practical applications, the scattering problems may involve anisotropic scatterers.
In this case, the uniaxial PML method has a big advantage over the circular or spherical PML method
as it provides greater flexibility and efficiency in solving such problems.
Thus, in this section, we propose and study the uniaxial PML method for solving the time-domain electromagnetic
scattering problem \eqref{maxwell1}-\eqref{smrc}.

\subsection{The PML equation in the Cartesian coordinates}\label{subsec_upml}

In this subsection, we derive the PML equation in the Cartesian coordinates. To this end,
\begin{figure}[!htbp]
\setcounter{subfigure}{0}
  \centering
  \includegraphics[width=3in]{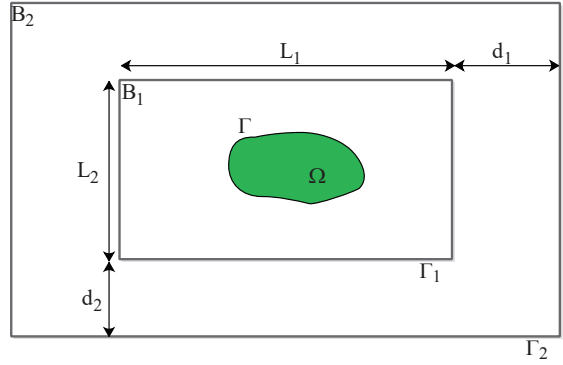}
 \caption{Geometric configuration of the uniaxial PML}\label{geometry}
\end{figure}
define $B_2:=\{x=(x_1,x_2,x_3)^{\top}\in\R^3:|x_j|<L_j/2+d_j,\;j=1,2,3\}$ with boundary $\G_2=\pa B_2$
which is a cubic domain surrounding $B_1$. Denote by $\n_2$ the unit outward normal to $\G_2$.
Let $\Om^{\PML}=B_{2}\ba\ov{B}_1$ be the PML layer and let $\Om_{2}=B_{2}\ba\ov{\Om}$
be the truncated PML domain. See Figure \ref{geometry} for the uniaxial PML geometry.

For $x=(x_1,x_2,x_3)^{\top}\in\R^3$, let $s_1>0$ be an arbitrarily fixed parameter and let us define
the PML medium property as
\ben
\al_j(x_j)=1+s_1^{-1}\sig_j(x_j),\;\;\; j=1,2,3,
\enn
where
\begin{equation}\label{para_upml}
\sig_j(x_j)=\begin{cases}
\ds 0, & |x_j| \leq L_{j}/2,\\
\ds \wit{\sig}_j \left(\frac{|x_j|-L_{j}/2}{d_{j}}\right)^{m},
    & L_{j}/2 < |x_j| \leq L_{j}/2 + d_j,\\
\ds \wit{\sig}_j, & L_{j}/2+d_j<|x_j|<\infty
  \end{cases}
\end{equation}
with positive constants $\wit{\sig}_j$, $j=1,2,3$, and integer $m\geq 1$.
In what follows, we will take the real part of the Laplace transform variable $s\in\C_+$ to be
$s_1$, that is, $\Rt(s)=s_1$.

In the rest of this paper, we always make the following assumptions on the thickness of the PML layer
and the parameters $\wit{\sig}_j$, which are reasonable in our model:
\begin{align}\label{thick_upml}
& d_1=d_2=d_3:=d\geq 1,\;\;\;\;\;\;L=\max\{L_1,L_2,L_3\}\leq C_0d, \\
& \wit{\sig}_1=\wit{\sig}_2=\wit{\sig}_3:=\sig_0>0 \label{para}
\end{align}
for a fixed generic constant $C_0$. Under the assumptions \eqref{thick_upml} and \eqref{para} we have
\be\label{int_sig}
\int_{0}^{{L_{j}}/{2}+d_{j}}\sig_{j}(\tau)d\tau=\frac{\sig_0 d}{m+1},\quad j=1,2,3.
\en
We remark that the constant assumption on $d_j$ and $\wit{\sig}_j$ in \eqref{thick_upml}-\eqref{para}
is only to simplify the convergence analysis but not mandatory.
We now introduce the real stretched Cartesian coordinates $\wit{x}=(\wit{x}_1,\wit{x}_2,\wit{x}_3)^{\top}$ with
\be\label{coord_upml}
\wit{x}_j=\int_0^{x_j}\al_j(\tau)d\tau,\;\;\;j=1,2,3.
\en

Noting that the solution of the exterior problem \eqref{s_exterior_u} in $\R^3\ba\ov{B}_1$ can be
derived as the integral representation \cite[Theorem 12.2]{Monk2003}, we can derive the PML
extension under the stretched coordinates $\wit{x}$ by following \cite{wyz-sinum-2020}.
For any $\bm p\in H^{-1/2}(\Dive,\G_1)$ and $\bm q\in H^{-1/2}(\Dive,\G_1)$, define
\be\label{extension_upml}
\mathbb{E}(\bm p,\bm q)(x):=-\wit{\bm\Psi}_{{\rm SL}}(\bm q)(x)-\wit{\bm\Psi}_{{\rm DL}}(\bm p)(x),
\;\;\;x\in\R^3\ba\ov{B}_1,
\en
where the stretched single- and double-layer potentials are defined as
\ben
\wit{\bm\Psi}_{{\rm SL}}(\bm q)=\int_{\G_1}\wit{\mathbb{G}}^{\top}(s,x,y)\bm q(y)d\g(y),\qquad
\wit{\bm\Psi}_{{\rm DL}}(\bm p)=\int_{\G_1}(\curl_y\wit{\mathbb{G}})^{\top}(s,x,y)\bm p(y)d\g(y).
\enn
Here, the stretched dyadic Green's function is given by
\be\label{Green_upml}
\wit{\mathbb{G}}(s,x,y)=\wit{\Phi}_s(x,y)\mathbb{I}+\frac{1}{k^2}\nabla_y\nabla_y\wit{\Phi}_s(x,y),
\;\;\;x\neq y,\;\;\;k=i\sqrt{\vep\mu}s
\en
with the stretched fundamental solution and the complex distance
\be\label{funda_sol_upml}
\wit{\Phi}_s(x,y)=\frac{e^{-\sqrt{\vep\mu}\rho_s(\wit{x},y)}}{4\pi\rho_s(\wit{x},y)s^{-1}},
\;\;\;\;\;\rho_s(\wit{x},y)=s|\wit{x}-y|.
\en
Introduce the stretched curl operator acting on vector $\bm u=(u_1,u_2,u_3)^{\top}$:
\ben
\wit{\curl}\;{\bm u}=\wit{\nabla}\times{\bm u}:=\left(\frac{\pa u_3}{\pa\wit{x}_2}
-\frac{\pa u_2}{\pa\wit{x}_3},\frac{\pa u_1}{\pa\wit{x}_3}-\frac{\pa u_3}{\pa\wit{x}_1},
\frac{\pa u_2}{\pa\wit{x}_1}-\frac{\pa u_1}{\pa\wit{x}_2}\right)^{\top}
=\mathbb{A}\na\times\mathbb{B}{\bm u}
\enn
with the diagonal matrices
\be\label{BA}
\mathbb{A}=\diag\left\{\frac{1}{\al_2\al_3},\frac{1}{\al_1\al_3},\frac{1}{\al_1\al_2}\right\}
\;\;\;\;{\rm and}\;\;\;\;\mathbb{B}=\diag\{\al_1,\al_2,\al_3\}.
\en
The PML extension in the $s$-domain in $\R^3\ba\ov{B}_1$ of $\g_t(\ch{\bm E})|_{\G_1}$
and $\g_t(\curl\;\ch{\bm E})|_{\G_1}$ is then defined as
\be\label{em_upml}
\ch{\wit{\bm E}}(x)=\mathbb{E}(\g_t(\ch{\bm E}),\g_t(\curl\;\ch{\bm E})),\;\;\;x\in\R^3\ba\ov{B}_1.
\en
Define $\ch{\wit{\bm H}}(x):=-(\mu s)^{-1}\wit{\curl}\;\ch{\wit{\bm E}}(x)$ for $x\in\R^3\ba\ov{B}_1.$
Then it is easy to see that $(\ch{\wit{\bm E}},\ch{\wit{\bm H}})$ satisfies the Maxwell equation
in the $s$-domain:
\be\label{stretched_maxwell}
\wit{\na}\ti\ch{\wit{\bm{E}}}+\mu s\ch{\wit{\bm{H}}}=\0,\;\;\;\;
\wit{\na}\ti\ch{\wit{\bm{H}}}-\vep s\ch{\wit{\bm{E}}}=\0\;\;\;\gin\;\;\;\R^3\se\ov{B}_1.
\en
Define
\ben
(\bm E^{{\rm PML}},\bm H^{{\rm PML}}):=\mathbb{B}(\mathscr{L}^{-1}(\ch{\wit{\bm{E}}}),
\mathscr{L}^{-1}(\ch{\wit{\bm{H}}})).
\enn
Then $(\bm{E}^{{\rm PML}},\bm{H}^{{\rm PML}})$ can be viewed as the extension in the region $\R^3\ba\ov{B}_1$
of the solution of the problem \eqref{maxwell1}-\eqref{smrc} since, by the fact that $\al_j=1$ on $\G_1$
for $j=1,2,3$ we have $\bm{E}^{{\rm PML}}=\bm E$, $\bm{H}^{{\rm PML}}=\bm H$ on $\G_1$.
If we set $\bm{E}^{{\rm PML}}=\bm E$ and $\bm{H}^{{\rm PML}}=\bm H$ in $\Om_1\times(0,T)$,
then $(\bm E^{{\rm PML}},\bm H^{{\rm PML}})$ satisfies the PML problem:
\begin{eqnarray}\label{upml}
\begin{cases}
\ds\nabla\times\bm{E}^{{\rm PML}}+\mu(\mathbb{BA})^{-1}\pa_t\bm{H}^{{\rm PML}}=\0&\gin\;\;\;
    (\R^3\ba\ov{\Om})\times(0,T),\\
\ds\nabla\times\bm{H}^{{\rm PML}}-\vep(\mathbb{BA})^{-1}\pa_t\bm{E}^{{\rm PML}}=\bm J&\gin\;\;\;
    (\R^3\ba\ov{\Om})\times(0,T),\\
\ds\bm{n}\times\bm E^{{\rm PML}}=\0&\on\;\;\;\G\times(0,T),\\
\ds\bm E^{{\rm PML}}(x,0)=\bm H^{{\rm PML}}(x,0)=\0&\gin\;\;\;\R^3\ba\ov{\Om}.
\end{cases}
\end{eqnarray}
The truncated PML problem in the time domain is to find $(\bm E^{p},\bm H^{p})$,
which is an approximation to $(\bm E,\bm H)$ in $\Om_1$, such that
\begin{eqnarray}\label{tupml}
\begin{cases}
\ds\nabla\times\bm{E}^p+\mu(\mathbb{BA})^{-1}\pa_t\bm{H}^p=\0&\gin\;\;\;\Om_{2}\times(0,T),\\
\ds\nabla\times\bm{H}^p-\vep(\mathbb{BA})^{-1}\pa_t\bm{E}^p=\bm J&\gin\;\;\;\Om_{2}\times(0,T),\\
\ds\bm{n}\times\bm E^p=\0&\on\;\;\;\G\times(0,T),\\
\ds\n_2\times\bm E^p=\0&\on\;\;\;\G_{2}\times(0,T),\\
\ds\bm E^p(x,0)=\bm H^p(x,0)=\0&\gin\;\;\;\Om_{2}.
\end{cases}
\end{eqnarray}

\subsection{Well-posedness of the truncated PML problem}

We now study the well-posedness of the truncated PML problem \eqref{tupml},
employing the Laplace transform technique and a variational method.
Eliminate $\bm H^p$ and take the Laplace transform of \eqref{tupml} to obtain that
\be\label{espml}
\begin{cases}
&\nabla\times\left[(\mu s)^{-1}\mathbb{BA}\nabla\times\ch{\bm E}^{p}\right]
+\vep s(\mathbb{BA})^{-1}\ch{\bm E}^{p}=-\ch{\bm J}\;\;\;\gin\;\;\;\Om_{2},\\
& \bm{n}\times\ch{\bm E}^{p}=\0 \quad\on\;\;\;\G,\\
& \n_2\times\ch{\bm E}^{p}=\0\quad\on\;\;\;\G_{2}.
\end{cases}
\en
The variational formulation of \eqref{espml} can be derived as follows:
find a solution $\ch{\bm E}^p\in H_{0}(\curl,\Om_2)$ such that
\be\label{varform_upml}
a_p\left(\ch{\bm{E}}^p,\bm{V}\right)=-\int_{\Om_1}\ch{\bm J}\cdot\ov{\bm{V}} dx,
\qquad\forall\;\bm V\in H_{0}(\curl,\Om_2),
\en
where the sesquilinear form $a_p(\cdot,\cdot)$ is defined as
\be\label{variation_upml}
a_p\left(\ch{\bm{E}}^p,\bm{V}\right)=\int_{\Om_2}\left[(\mu s)^{-1}\mathbb{BA}(\na\times\ch{\bm{E}}^p)
\cdot(\na\times\ov{\bm V})dx+\vep s(\mathbb{BA})^{-1}\ch{\bm{E}}^p\cdot\ov{\bm V}\right]dx.
\en
We have the following result on the well-posedness of the variational problem \eqref{varform_upml}.

\begin{lemma}\label{lem_ep}
For each $s\in\C_+$ with $\Rt(s)=s_1>0$ the variational problem \eqref{varform_upml} has a unique
solution $\ch{\bm E}^p\in H_0(\curl,\Om_{2})$. Further, it holds that
\be\label{ep_estimate}
\|\nabla\times\ch{\bm E}^p\|_{L^2(\Om_{2})^3}+\|s\ch{\bm E}^p\|_{L^2(\Om_{2})^3}
\les s_1^{-1}(1+s_1^{-1}\sig_0)^2\|s\ch{\bm J}\|_{L^2(\Om_1)^3}.
\en
\end{lemma}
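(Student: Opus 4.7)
The plan is to apply the Lax--Milgram theorem in the Hilbert space $H_0(\curl,\Om_2)$ to the sesquilinear form $a_p(\cdot,\cdot)$ from \eqref{variation_upml}. The crucial structural observation is that, since $\Rt(s)=s_1>0$ and the coordinate stretching is real, each PML factor $\al_j = 1+s_1^{-1}\sig_j(x_j)$ is a real positive function with $1\le\al_j\le 1+s_1^{-1}\sig_0$ by \eqref{para_upml}--\eqref{para}. Consequently both diagonal matrices $\mathbb{BA}$ and $(\mathbb{BA})^{-1}$ have real positive diagonal entries lying in $[(1+s_1^{-1}\sig_0)^{-2},(1+s_1^{-1}\sig_0)^2]$, so the real part of $a_p$ is automatically positive definite with no complex cancellations to track.

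To verify coercivity, I would test $a_p$ with $\bm V=\ch{\bm E}^p$ and take the real part:
\ben
\Rt a_p(\ch{\bm E}^p,\ch{\bm E}^p) = \frac{s_1}{\mu|s|^2}\int_{\Om_2}\mathbb{BA}(\na\times\ch{\bm E}^p)\cdot\ov{(\na\times\ch{\bm E}^p)}dx + \vep s_1\int_{\Om_2}(\mathbb{BA})^{-1}\ch{\bm E}^p\cdot\ov{\ch{\bm E}^p}dx.
\enn
Since both integrands are pointwise nonnegative and each diagonal entry is bounded below by $(1+s_1^{-1}\sig_0)^{-2}$, inserting $\|\ch{\bm E}^p\|_{L^2}^2 = |s|^{-2}\|s\ch{\bm E}^p\|_{L^2}^2$ gives
\ben
\Rt a_p(\ch{\bm E}^p,\ch{\bm E}^p) \gtrsim \frac{s_1}{|s|^2(1+s_1^{-1}\sig_0)^2}\left(\|\na\times\ch{\bm E}^p\|^2_{L^2(\Om_2)^3} + \|s\ch{\bm E}^p\|^2_{L^2(\Om_2)^3}\right),
\enn
which is the direct analogue of \eqref{coer_a} with a PML-induced factor $(1+s_1^{-1}\sig_0)^{-2}$. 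Continuity of $a_p$ is immediate from the matching $L^\ify$ upper bound $(1+s_1^{-1}\sig_0)^2$, so Lax--Milgram yields the unique solution $\ch{\bm E}^p\in H_0(\curl,\Om_2)$.

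For the quantitative estimate I would bound the right-hand side of \eqref{varform_upml} by Cauchy--Schwarz as
\ben
\left|\int_{\Om_1}\ch{\bm J}\cdot\ov{\ch{\bm E}^p}dx\right| \le |s|^{-2}\|s\ch{\bm J}\|_{L^2(\Om_1)^3}\|s\ch{\bm E}^p\|_{L^2(\Om_2)^3}
\enn
and combine with the coercivity lower bound. Writing $X:=\big(\|\na\times\ch{\bm E}^p\|^2_{L^2(\Om_2)^3} + \|s\ch{\bm E}^p\|^2_{L^2(\Om_2)^3}\big)^{1/2}$, the two inequalities chain to $X^2 \les s_1^{-1}(1+s_1^{-1}\sig_0)^2\|s\ch{\bm J}\|_{L^2(\Om_1)^3}\,X$, whence cancelling one factor of $X$ gives \eqref{ep_estimate} after replacing the square-sum by the sum of norms.

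The argument is essentially a transcription of the well-posedness proof for \eqref{a} from Section \ref{sec_wp}, so there is no genuine obstacle. The one point requiring care is tracking the pointwise upper and lower bounds on $\mathbb{BA}$ and $(\mathbb{BA})^{-1}$ so that the PML loss enters as precisely $(1+s_1^{-1}\sig_0)^2$ rather than some higher power; this relies on the fact that, because each column of $\mathbb{BA}$ involves one $\al$-factor in the numerator and two in the denominator (and vice versa for $(\mathbb{BA})^{-1}$), the extremal ratio over the PML layer is bounded by $(1+s_1^{-1}\sig_0)^2$. The real nature of the stretching removes any complex-analytic subtlety at this stage; the $\sig_0$ dependence becomes interesting only later when inverting the Laplace transform to obtain the exponential convergence in $t$.
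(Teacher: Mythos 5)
Your proposal is correct and follows essentially the same route as the paper: pointwise bounds on the real positive diagonal entries of $\mathbb{BA}$ and $(\mathbb{BA})^{-1}$ give the coercivity estimate \eqref{coer_ap}, Lax--Milgram yields existence and uniqueness, and chaining coercivity with the Cauchy--Schwarz bound $|\int_{\Om_1}\ch{\bm J}\cdot\ov{\ch{\bm E}^p}dx|\le|s|^{-2}\|s\ch{\bm J}\|_{L^2(\Om_1)^3}\|s\ch{\bm E}^p\|_{L^2(\Om_2)^3}$ produces \eqref{ep_estimate}. No gaps.
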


\begin{proof}
By the definition of the diagonal matrix $\mathbb{BA}$ (see \eqref{BA}) and a direct calculation
it easily follows that
\begin{align}\label{estimate_BA}
(1+s_1^{-1}\sig_0)^{-2}&\le|\mathbb{BA}|\le(1+s_1^{-1}\sig_0)\quad\gin\;\;\Om^{\PML},\\ \label{estimate_BA1}
(1+s_1^{-1}\sig_0)^{-1}&\le|(\mathbb{BA})^{-1}|\le(1+s_1^{-1}\sig_0)^2,\quad\gin\;\;\Om^{\PML}.
\end{align}
Thus, it is derived that
\be\label{coer_ap}
\Rt[a_p(\ch{\bm{E}}^p,\ch{\bm{E}}^p)]\gtrsim\frac{1}{(1+s_1^{-1}\sig_0)^2}\frac{s_1}{|s|^2}
\left(\|\na\times\ch{\bm E}^p\|_{L^2(\Om_2)^3}+\|s\ch{\bm E}^p\|_{L^2(\Om_2)^3}\right).
\en
The existence and uniqueness of solutions to the problem \eqref{varform_upml} then follow
from the Lax-Milgram theorem.
The estimate \eqref{ep_estimate} can be obtained by combining \eqref{varform_upml}, \eqref{coer_ap}
and the Cauchy-Schwartz inequality. The proof is thus complete.
\end{proof}

To show the well-posedness of the truncated PML problem \eqref{tupml} in the time domain, we need
the following lemma which is the analog of the Paley-Wiener-Schwartz theorem for the Fourier transform
of the distributions with compact support in the case of Laplace transform \cite[Theorem 43.1]{treves1975}.

\begin{lemma}{\rm \cite[Theorem 43.1]{treves1975}.}\label{treves}
Let $\ch{\bm{\om}}(s)$ denote a holomorphic function in the half complex plane $s_1=\Rt(s)>\sig_0$
for some $\sig_0\in\R$, valued in the Banach space $\E$. Then the following statements are equivalent:
\begin{enumerate}[1)]
\item there is a distribution $\om\in\mathcal{D}_+^{'}(\E)$ whose Laplace transform is equal to
$\ch{\bm{\om}}(s)$, where $\mathcal{D}_+^{'}(\E)$ is the space of distributions on the real line
which vanish identically in the open negative half-line;
\item there is a $\sig_1$ with $\sig_0\leq\sig_1<\infty$ and an integer $m\geq0$ such that for
all complex numbers $s$ with $s_1=\Rt(s)>\sig_1$ it holds that $\V\ch{\bm{\om}}(s)\V_{\E}\les(1+|s|)^m$.
\end{enumerate}
\end{lemma}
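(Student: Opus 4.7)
The identification $\ch{\bm\om}(s_1 + is_2) = \mathscr{F}_{s_2}[e^{-s_1 t}\bm\om(t)](s_2)$, valid whenever $e^{-s_1 t}\bm\om$ defines a tempered $\E$-valued distribution in $t$, reduces both implications to a Paley-Wiener-Schwartz type statement for the weighted distribution $e^{-s_1 t}\bm\om$. Accordingly, I would handle the two directions separately, using the exponential weight to shuttle between the $\mathcal{D}_+^{'}$-side and the tempered/Fourier-side.

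For $1) \Rightarrow 2)$, first I would fix any $s_1 > \sig_0$ and argue that $e^{-s_1 t}\bm\om$ is tempered: since $\bm\om \in \mathcal{D}_+^{'}(\E)$ vanishes on $(-\ify,0)$, only growth at $+\ify$ matters and is killed by the exponential weight. To upgrade this to the uniform polynomial estimate $\V\ch{\bm\om}(s)\V_{\E} \les (1+|s|)^m$ for $\Rt(s) > \sig_1$, I would introduce a smooth cutoff $\chi(t)$ equal to $1$ on $[0,1]$ and supported in $[-1,2]$, decompose $\bm\om = \chi\bm\om + (1-\chi)\bm\om$, and estimate the two pieces separately. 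The compactly supported part $\chi\bm\om$ has some finite order $m$ by the structure theorem for distributions with compact support, giving a bound of order $(1+|s|)^m e^{2|s_1|}$ uniform in $s_2$; the tail $(1-\chi)\bm\om$ is paired with $e^{-st}$ for $s_1$ sufficiently large so that exponential decay absorbs the polynomial growth of the distribution.

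For $2) \Rightarrow 1)$, I would invert via the Bromwich contour: choose any $\sig > \sig_1$, divide by the regularizing factor $(1+s)^{m+2}$ so that $\ch{\bm\om}(s)/(1+s)^{m+2}$ is integrable on the vertical line $\{\Rt(s) = \sig\}$, and define the continuous $\E$-valued function
\ben
\eta_\sig(t) := \frac{1}{2\pi i}\int_{\sig - i\ify}^{\sig + i\ify} e^{st}\,\frac{\ch{\bm\om}(s)}{(1+s)^{m+2}}\,ds.
\enn
Using the joint holomorphy and polynomial bound of $\ch{\bm\om}$, a standard contour-shift argument in the strip $\{\sig_1 < \Rt(s) < \sig'\}$ shows $\eta_\sig$ does not depend on the choice of $\sig > \sig_1$, so one obtains a single continuous $\eta$ on $\R$. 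For $t < 0$ the modulus of the integrand is $e^{\sig t}/|1+s|^{m+2}$ times a polynomially bounded factor, so letting $\sig \to +\ify$ forces $\eta(t) = 0$ on $(-\ify, 0)$. Then $\bm\om := (I - \pa_t)^{m+2}\eta$ lies in $\mathcal{D}_+^{'}(\E)$, and since $(I - \pa_t)^{m+2}$ corresponds to multiplication by $(1+s)^{m+2}$ on the transform side, a Fubini-justified computation recovers the Laplace transform of $\bm\om$ as $\ch{\bm\om}(s)$ for $\Rt(s) > \sig_1$, which extends to $\Rt(s) > \sig_0$ by analytic continuation.

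The main obstacle is the sufficiency direction: one must justify the contour independence and the $t < 0$ vanishing of $\eta$ using holomorphy and the polynomial growth together, and then cleanly recover the Laplace transform of the reconstructed distribution by exchanging the distributional action with the Bromwich integral. The vector-valued setting adds the technical point that the usual scalar Cauchy theorem and Fubini-Tonelli arguments must be applied via $\E$-valued holomorphy and Bochner integration, which requires some care but follows standard lines.
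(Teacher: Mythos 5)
The paper does not prove this lemma at all: it is quoted verbatim from Tr\`eves \cite[Theorem 43.1]{treves1975}, so there is no internal proof to compare with; your strategy (viewing $\ch{\bm\om}(s_1+is_2)$ as the Fourier transform in $s_2$ of the weighted distribution $e^{-s_1t}\bm\om$, and inverting via a regularized Bromwich integral) is the standard textbook route. Your sufficiency direction $2)\Rightarrow 1)$ is essentially sound: the decay $(1+|s|)^{-2}$ after division by $(1+s)^{m+2}$, the contour-shift independence, the vanishing for $t<0$ by letting $\sig\to+\ify$, and the recovery of the transform by Fubini all work, with one slip — $(I-\pa_t)^{m+2}$ corresponds to multiplication by $(1-s)^{m+2}$, so you must take $(I+\pa_t)^{m+2}$ (and keep $\sig>\max\{\sig_1,-1\}$ so the factor $(1+s)^{m+2}$ has no zero in the relevant half plane).

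The genuine gap is in $1)\Rightarrow 2)$. First, the temperedness of $e^{-s_1t}\bm\om$ is not a consequence of the support condition alone: a distribution in $\mathcal{D}_+^{'}(\E)$ may grow like $e^{t^2}$, and then no exponential weight makes it tempered. Temperedness of $e^{-\sig t}\bm\om$ for some $\sig$ is part of what ``the Laplace transform of $\bm\om$ exists'' means and should simply be invoked from the hypothesis (with $\sig_1$ chosen at least as large as the abscissa, which need not equal $\sig_0$). Second, and more seriously, your estimate for the compactly supported piece $\chi\bm\om$ carries the factor $e^{2|s_1|}$, and this is fatal for the statement you must prove: condition $2)$ requires a purely polynomial bound $(1+|s|)^m$ on the whole half plane $\Rt(s)>\sig_1$, where $s_1=\Rt(s)$ is unbounded above and $|s|\ge s_1$, so an $e^{c|s_1|}$ factor cannot be absorbed into any power of $(1+|s|)$. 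The factor arises because your cutoff lives on $[-1,2]$ and the seminorms see $t<0$, where $|e^{-st}|=e^{-s_1t}$ is exponentially large. The standard repair is the scaled-cutoff trick: test against $g(\la t)e^{-st}$ with $g=1$ on $[0,\ify)$, $g=0$ on $(-\ify,-1]$, and $\la\sim \Rt(s)-\sig_1$, so the portion of the support with $t<0$ has length $O(1/(s_1-\sig_1))$ — there $e^{-(s_1-\sig_1)t}$ stays bounded by $e$ — while each derivative of the cutoff contributes only a factor $\la\les 1+|s|$; combined with the finite order $N$ of the tempered distribution $e^{-\sig_1 t}\bm\om$ this yields $\|\ch{\bm\om}(s)\|_{\E}\les(1+|s|)^{N'}$ for $\Rt(s)\ge\sig_1+1$. (Merely asserting that the seminorms of the compactly supported piece can be taken over its exact support $[0,2]$ is not automatic for finite-order distributions and would itself need justification.) As written, your argument establishes a bound of the form $(1+|s|)^m e^{2|s_1|}$, which does not prove $2)$.
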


The well-posedness and stability of the truncated PML problem \eqref{tupml} can be proved
by using Lemmas \ref{lem_ep} and \ref{treves} and the energy method (cf. \cite[Theorem 3.1]{chen2008}).

\begin{theorem}\label{thm_well_tupml}
Let $s_1=1/T$. Then the truncated PML problem \eqref{tupml} in the time domain has a unique solution
$(\bm E^p(x,t),\bm H^p(x,t))$ with
\ben
&\bm E^p\in L^2\big(0,T;H_0(\curl,\Om_{2})\big)\cap H^1\left(0,T;L^2(\Om_{2})^3\right),\\
&\bm H^p\in L^2\big(0,T;H_0(\curl,\Om_{2})\big)\cap H^1\left(0,T;L^2(\Om_{2})^3\right)
\enn
and satisfying the stability estimate
\begin{align}
&\max\limits_{t\in[0,T]}\left[\|\pa_t\bm E^p\|_{L^2(\Om_{2})^3}+\|\nabla\times\bm E^p\|_{L^2(\Om_{2})^3}
  +\|\pa_t\bm H^p\|_{L^2(\Om_{2})^3}+\|\nabla\times\bm H^p\|_{L^2(\Om_{2})^3}\right]\no \\ \label{stability_tupml}
&\qquad\les(1+\sig_0T)^3\|\bm J\|_{H^1(0,T;L^2(\Om_1)^3)}.
\end{align}
\end{theorem}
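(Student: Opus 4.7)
The plan is to establish the two conclusions of Theorem \ref{thm_well_tupml} in turn: first the existence, uniqueness, and claimed regularity of $(\bm E^p,\bm H^p)$ by lifting the Laplace-domain result of Lemma \ref{lem_ep} back to the time domain via the Paley-Wiener-Schwartz-type result of Lemma \ref{treves}, and second the stability estimate \eqref{stability_tupml} by an energy argument applied directly to the PML system \eqref{tupml} in the time domain.

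For the existence part, I fix $s_1=1/T$ and, for every $s\in\C_+$ with $\Rt(s)=s_1$, invoke Lemma \ref{lem_ep} to produce $\ch{\bm E}^p(s)\in H_0(\curl,\Om_2)$. Set $\ch{\bm H}^p(s):=-(\mu s)^{-1}\mathbb{BA}\na\times\ch{\bm E}^p(s)$; both fields depend holomorphically on $s$ on $\C_+$ by the uniqueness in Lemma \ref{lem_ep} and the analytic dependence of the sesquilinear form \eqref{variation_upml} on $s$. The bound \eqref{ep_estimate}, combined with $|\mathbb{BA}|\les 1+s_1^{-1}\sig_0$ from \eqref{estimate_BA}, yields a polynomial-in-$|s|$ estimate of $\|\ch{\bm E}^p\|_{H(\curl,\Om_2)}+\|s\ch{\bm H}^p\|_{L^2(\Om_2)^3}$ in terms of $\|s\ch{\bm J}\|_{L^2(\Om_1)^3}$. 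The assumptions \eqref{assump_upml}-\eqref{assump1_upml} ensure that $\|s\ch{\bm J}(s)\|_{L^2(\Om_1)^3}$ itself grows only polynomially in $|s|$ along $\Rt(s)=s_1$, so Lemma \ref{treves} returns distributional inverses $\bm E^p,\bm H^p$ vanishing for $t<0$. Applying Plancherel on the vertical line $\Rt(s)=s_1$ to \eqref{ep_estimate} (and to its $\ch{\bm H}^p$ counterpart) then places the pair in $L^2(0,T;H_0(\curl,\Om_2))\cap H^1(0,T;L^2(\Om_2)^3)$.

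For the stability estimate, I differentiate \eqref{tupml} in time, set $\bm U=\pa_t\bm E^p$ and $\bm V=\pa_t\bm H^p$, test the $\bm U$-equation against $\mathbb{BA}\bm V$ and the $\bm V$-equation against $-\mathbb{BA}\bm U$, add, and integrate over $\Om_2$. The boundary terms vanish because of the boundary conditions in \eqref{tupml} together with \eqref{curl_form}, and we are left with a time derivative of the weighted energy $\int_{\Om_2}(\mu|\bm V|^2+\vep|\bm U|^2)\,dx$ plus a spatial commutator $[\na\times,\mathbb{BA}]$ supported in $\Om^{\PML}$. That commutator is controlled by $|\na\mathbb{BA}|\les\sig_0/d$, which follows from \eqref{para_upml} under \eqref{thick_upml}, and then absorbed via Gronwall's inequality. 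The cubic dependence $(1+\sig_0T)^3$ in \eqref{stability_tupml} emerges from tracking where the PML weights enter: one factor from $|\mathbb{BA}|$ in pairing against the source $\pa_t\bm J$, and two further factors from the bound $|(\mathbb{BA})^{-1}|\le(1+\sig_0T)^2$ in \eqref{estimate_BA1} that is needed when recovering $\na\times\bm E^p$ and $\na\times\bm H^p$ from \eqref{tupml} after $\pa_t\bm H^p$ and $\pa_t\bm E^p$ have been bounded. Passing to the supremum over $t\in[0,T]$ and using the vanishing initial data completes the estimate.

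The main obstacle I anticipate is the spatial commutator $[\na\times,\mathbb{BA}]$, which is the genuinely new ingredient compared with the non-PML energy argument of \cite{chen2008}: the $x$-dependence of $\mathbb{BA}$ across $\Om^{\PML}$ generates lower-order terms whose size must be tracked precisely so that exactly three powers of $1+\sig_0T$, and no more, appear in the final constant. The Paley-Wiener step, by contrast, is routine once \eqref{ep_estimate} is in hand, since the ten derivatives imposed on $\bm J$ in \eqref{assump_upml} leave considerable margin.
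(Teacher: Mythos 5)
Your first half (well-posedness and regularity via Lemma \ref{lem_ep}, holomorphy in $s$, Lemma \ref{treves} and Plancherel on the line $\Rt(s)=s_1$) is exactly the route the paper takes, and is fine. The genuine gap is in your energy argument for \eqref{stability_tupml}. Testing the time-differentiated equations against $\mathbb{BA}\bm V$ and $-\mathbb{BA}\bm U$ does \emph{not} reduce to ``weighted energy derivative plus a lower-order commutator''. Writing $\mathbb{BA}=\diag(b_1,b_2,b_3)$ and integrating by parts, the curl terms leave, besides a boundary term and a term of size $|\na\mathbb{BA}|$, the principal-part residue $\int_{\Om_2}\epsilon_{ijk}(b_k-b_i)\,\pa_j V_k\,U_i\,dx$, which vanishes only if $\mathbb{BA}$ is a scalar matrix; inside $\Om^{\PML}$ the diagonal entries $\al_1/(\al_2\al_3)$, $\al_2/(\al_1\al_3)$, $\al_3/(\al_1\al_2)$ are genuinely different (e.g.\ $\al_1$ versus $1/\al_1$ on a face), so a first-order term in $\bm V$ survives that the energy cannot absorb. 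Even if only the commutator were present, absorbing it ``via Gronwall'' would produce a factor $\exp(cT\,\|\na\mathbb{BA}\|_\infty)$; since $\al_j=1+s_1^{-1}\sig_j$ with $s_1^{-1}=T$, one has $|\na\mathbb{BA}|\sim T\sig_0/d$ (you dropped the factor $s_1^{-1}=T$), so Gronwall gives an exponential in $\sig_0T^2/d$, not the polynomial $(1+\sig_0T)^3$ claimed in \eqref{stability_tupml}; such an exponential would also ruin the later convergence estimates.

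The repair is to \emph{not} move $\mathbb{BA}$ into the test functions: differentiate \eqref{tupml} in time and test the two equations with $\pa_t\bm H^p$ and $\pa_t\bm E^p$ themselves. Then the curl terms cancel exactly through \eqref{curl_form} and the boundary conditions $\n\times\bm E^p=\0$ on $\G$, $\n_2\times\bm E^p=\0$ on $\G_2$, and since $(\mathbb{BA})^{-1}$ is a time-independent, symmetric positive definite matrix one obtains the clean identity
\begin{equation*}
\frac{1}{2}\frac{d}{dt}\int_{\Om_2}\Big(\mu(\mathbb{BA})^{-1}\pa_t\bm H^p\cdot\pa_t\bm H^p
+\vep(\mathbb{BA})^{-1}\pa_t\bm E^p\cdot\pa_t\bm E^p\Big)dx=-\int_{\Om_1}\pa_t\bm J\cdot\pa_t\bm E^p\,dx,
\end{equation*}
with zero initial energy because $\pa_t\bm E^p(\cdot,0)=\pa_t\bm H^p(\cdot,0)=\0$ (from the equations at $t=0$ and $\bm J(\cdot,0)=\0$). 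The three powers of $(1+\sig_0T)$ then come out exactly as in \eqref{estimate_BA}--\eqref{estimate_BA1}: one power from passing from the weighted to the unweighted norms of $\pa_t\bm E^p,\pa_t\bm H^p$, and two more from recovering $\na\times\bm E^p$ and $\na\times\bm H^p$ through the equations, where $|(\mathbb{BA})^{-1}|\le(1+\sig_0T)^2$. This is the energy argument the paper refers to (cf.\ \cite[Theorem 3.1]{chen2008}), and no commutator or Gronwall factor appears.
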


To study the convergence of the uniaxial PML method, we introduce the EtM operator
$\wih{\mathscr{B}}:  H^{-1/2}(\Curl,\G_1)\to H^{-1/2}(\Dive,\G_1)$ associated with
the truncated PML problem \eqref{espml} in the $s$-domain. Given $\bm\la \in H^{-1/2}(\Dive,\G_1)$,
define
\be\label{etm_pml}
\wih{\mathscr{B}}(\bm\la\times\n_1):=\n_1\times(\mu s)^{-1}\na\times{\bm u}\qquad\on \;\;\G_1,
\en
where $\bm u$ satisfies the following problem in the PML layer:
\be\label{eq_pml}
\begin{cases}
&\na\times\left[(\mu s)^{-1}\mathbb{BA}\na\times{\bm u}\right]+\vep s(\mathbb{BA})^{-1}{\bm u}=0
\qquad\gin\;\;\Om^{\PML}, \\
&\n_1\times{\bm u}=\bm\la\qquad\on\;\;\G_1,\qquad\n_2\times{\bm u}=\0\qquad\on\;\;\G_2.
\end{cases}
\en
We need to show that \eqref{eq_pml} has unique solution, so $\wih{\mathscr{B}}$ is well-defined.
To this end, we consider the following general problem with the tangential trace $\bm\xi$
on $\G_2$, which is needed for the convergence analysis of the PML method:
\be\label{eq_pml_1}
\begin{cases}
&\na\times\left[(\mu s)^{-1}\mathbb{BA}\na\times{\bm u}\right]
 +\vep s(\mathbb{BA})^{-1}{\bm u}=0\qquad\gin\;\;\Om^{\PML},\\
&\n_1\times{\bm u}=\bm\la\qquad\on\;\;\G_1,\qquad\n_2\times{\bm u}=\bm\xi\qquad\on\;\;\G_2.
\end{cases}
\en
Define the sesquilinear form $a^{\rm PML}:H(\curl,\Om^{\PML})\times H(\curl,\Om^{\PML})\ra\C$ as
\be\label{apml}
a^{\PML}(\bm u,\bm V):=\int_{\Om^{\rm PML}}(\mu s)^{-1}\mathbb{BA}(\na\times\bm u)
\cdot(\na\times\ov{\bm{V}})dx+\int_{\Om^{\PML}}\vep s(\mathbb{BA})^{-1}\bm u\cdot\ov{\bm{V}}dx.
\en
Then the variational formulation of \eqref{eq_pml_1} is as follows: Given $\bm\la\in H^{-1/2}(\Dive,\G_1)$
and $\bm\xi\in H^{-1/2}(\Dive,\G_2)$, find $\bm u\in H(\curl,\Om^{\PML})$ such that
$\n_1\times\bm u=\bm\la\;\on\;\G_1$, $\n_2\times\bm u=\bm\xi\;\on\;\G_2$ and
\be\label{apml_form}
a^{\PML}(\bm u,\bm V)=0,\;\;\;\forall\;\bm V\in H_0(\curl,\Om^{\PML}).
\en
Arguing similarly as in proving \eqref{coer_ap}, we obtain that for any $\bm V\in H_0(\curl,\Om^{\rm PML})$,
\be\label{coer_apml}
\Rt\left[a^{\PML}(\bm V,\bm V\right]\gtrsim\frac{1}{(1+s_1^{-1}\sig_0)^2}\frac{s_1}{|s|^2}
\left[\|\nabla\times\bm V\|_{L^2(\Om^{\rm PML})^3}^2+\|s\bm V\|_{L^2(\Om^{\rm PML})^3}^2\right].
\en
By \eqref{coer_apml} and the Lax-Milgram theorem it follows that the variational problem
\eqref{apml_form} has a unique solution. We have the following stability result for the solution
to the problem \eqref{eq_pml_1}.

\begin{lemma}\label{lem_stability_pml}
For any $\bm\la\in H^{-1/2}(\Dive,\G_1)$ and $\bm\xi\in H^{-1/2}(\Dive,\G_2)$, let $\bm u$ be
the solution to the problem \eqref{eq_pml_1}. Then
\be\label{u_estimate}
&\|\na\times\bm u\|_{L^2(\Om^{\PML})^3}+\|s\bm u\|_{L^2(\Om^{\PML})^3} \\
&\qquad\quad\les {s_1^{-1}}{(1+s_1^{-1}\sig_0)^4}|s|(1+|s|)
 (\|\bm\la\|_{H^{-1/2}(\Dive,\G_1)}+\|\bm\xi\|_{H^{-1/2}(\Dive,\G_2)}).
\en
\end{lemma}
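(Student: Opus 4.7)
The plan is to reduce the inhomogeneous boundary value problem \eqref{eq_pml_1} to a homogeneous variational problem by a standard lifting argument, and then apply the coercivity estimate \eqref{coer_apml} together with the bounds \eqref{estimate_BA}--\eqref{estimate_BA1} on $\mathbb{BA}$ and $(\mathbb{BA})^{-1}$ to close the estimate.

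First, since the trace operator $\g_t:H(\curl,\Om^{\PML})\to H^{-1/2}(\Dive,\pa\Om^{\PML})$ is continuous and surjective with a bounded right inverse, and since $\pa\Om^{\PML}=\G_1\cup\G_2$ is a disjoint union, I would construct a lifting $\bm{u}_0\in H(\curl,\Om^{\PML})$ satisfying $\n_1\times\bm{u}_0=\bm\la$ on $\G_1$, $\n_2\times\bm{u}_0=\bm\xi$ on $\G_2$, together with the bound
\ben
\|\bm{u}_0\|_{H(\curl,\Om^{\PML})}
\les \|\bm\la\|_{H^{-1/2}(\Dive,\G_1)}+\|\bm\xi\|_{H^{-1/2}(\Dive,\G_2)}.
\enn
(One may treat $\G_1$ and $\G_2$ separately via cut-off functions supported in disjoint neighborhoods.) Setting $\bm{w}:=\bm{u}-\bm{u}_0\in H_0(\curl,\Om^{\PML})$, the equation \eqref{eq_pml_1} translates to the variational identity
\ben
a^{\PML}(\bm{w},\bm{V})=-a^{\PML}(\bm{u}_0,\bm{V})\quad\forall\,\bm{V}\in H_0(\curl,\Om^{\PML}).
\enn

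Next, I would test with $\bm V=\bm w$ and apply the coercivity bound \eqref{coer_apml} to the left-hand side. On the right-hand side, using \eqref{estimate_BA}--\eqref{estimate_BA1} and Cauchy--Schwarz,
\ben
|a^{\PML}(\bm u_0,\bm w)|\les(1+s_1^{-1}\sig_0)^2\Big(|s|^{-1}\|\na\times\bm u_0\|_{L^2(\Om^{\PML})^3}\|\na\times\bm w\|_{L^2(\Om^{\PML})^3}+|s|\|\bm u_0\|_{L^2(\Om^{\PML})^3}\|\bm w\|_{L^2(\Om^{\PML})^3}\Big).
\enn
A Young's inequality with weight $\eps=s_1/[2(1+s_1^{-1}\sig_0)^2|s|^2]$ absorbs the $\bm w$-factors into the left-hand side of the coercivity estimate, producing
\ben
\|\na\times\bm w\|_{L^2(\Om^{\PML})^3}+\|s\bm w\|_{L^2(\Om^{\PML})^3}\les\frac{(1+s_1^{-1}\sig_0)^4|s|}{s_1}\big(\|\na\times\bm u_0\|_{L^2(\Om^{\PML})^3}+|s|\|\bm u_0\|_{L^2(\Om^{\PML})^3}\big).
\enn

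Finally, restoring $\bm u=\bm w+\bm u_0$, bounding $\|\na\times\bm u_0\|+|s|\|\bm u_0\|\le(1+|s|)\|\bm u_0\|_{H(\curl,\Om^{\PML})}$, and invoking the lifting estimate yields exactly \eqref{u_estimate} with the claimed factor $s_1^{-1}(1+s_1^{-1}\sig_0)^4|s|(1+|s|)$. The main technical point is the first step: verifying that the lifting can be chosen with an $s$-independent constant, which is purely a geometric/Sobolev fact about $\Om^{\PML}$ and requires only the disjointness of $\G_1$ and $\G_2$; beyond that, the remainder is a bookkeeping exercise tracking the $s_1$, $|s|$, and $(1+s_1^{-1}\sig_0)$ factors introduced by the anisotropic coefficients $\mathbb{BA}$ and $(\mathbb{BA})^{-1}$.
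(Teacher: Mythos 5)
Your proposal is correct and follows essentially the same route as the paper's proof: lift the boundary data to $\bm u_0\in H(\curl,\Om^{\PML})$ with a trace-theorem bound, subtract to get a homogeneous variational problem for $\bm w=\bm u-\bm u_0$, and combine the coercivity estimate \eqref{coer_apml} with the bounds \eqref{estimate_BA}--\eqref{estimate_BA1} and Cauchy--Schwarz to track the factors $s_1^{-1}(1+s_1^{-1}\sig_0)^4|s|(1+|s|)$. The only cosmetic difference is that you absorb the $\bm w$-terms via a weighted Young inequality, whereas the paper simply divides through by $\big(\|\na\times\bm w\|^2_{L^2(\Om^{\PML})^3}+\|s\bm w\|^2_{L^2(\Om^{\PML})^3}\big)^{1/2}$.
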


\begin{proof}
Let $\bm u_0\in H(\curl,\Om^{\PML})$ be such that $\n_1\times\bm u_0=\bm\la$,
$\n_2\times\bm u_0=\bm\xi$ on $\G_{2}$.
Then, by \eqref{apml_form} we have $\bm\om:=\bm u-\bm u_0\in H_0(\curl,\Om^{\PML})$ and
\be\label{minus_apml}
a^{\PML}(\bm\om,\bm V)=-a^{\PML}(\bm u_0,\bm V),\;\;\;\forall\;\bm V\in H_0(\curl,\Om^{\rm PML}).
\en
This, combined with \eqref{apml}-\eqref{coer_apml} and the Cauchy-Schwartz inequality, gives
\ben
&\frac{1}{(1+s_1^{-1}\sig_0)^2}\frac{s_1}{|s|^2}\left(\|\nabla\times\bm\om\|_{L^2(\Om^{\rm PML})^3}^2
  +\|s\bm\om\|_{L^2(\Om^{\rm PML})^3}^2\right)\\
&\quad\les\Rt\left[a^{\rm PML}(\bm\om,\bm\om)\right]\\
&\quad\les\frac{(1+s_1^{-1}\sig_0)^2}{|s|}\sqrt{1+|s|^2}\left(\|\nabla\times\bm\om\|_{L^2(\Om^{\rm PML})^3}^2
  +\|s\bm\om\|_{L^2(\Om^{\rm PML})^3}^2\right)^{1/2}\|\bm u_0\|_{H(\curl,\Om^{\PML})},
\enn
yielding
\ben
\left(\|\nabla\times\bm\om\|_{L^2(\Om^{\rm PML})^3}^2+\|s\bm\om\|_{L^2(\Om^{\rm PML})^3}^2\right)^{1/2}
\les\frac{(1+s_1^{-1}\sig_0)^4|s|\sqrt{1+|s|^2}}{s_1}\|\bm u_0\|^2_{H(\curl,\Om^{\PML})}.
\enn
This, together with the definition of $\bm\om$ and the Cauchy-Schwartz inequality, implies that
\ben
\|\nabla\times\ch{\bm u}\|_{L^2(\Om^{\rm PML})^3}+\|s\ch{\bm u}\|_{L^2(\Om^{\rm PML})^3}
\les\frac{(1+s_1^{-1}\sigma_0)^4|s|(1+|s|)}{s_1}\|\bm u_0\|_{H(\curl,\Om^{\PML})}.
\enn
The desired estimate \eqref{u_estimate} then follows from the trace theorem.
\end{proof}

Now, by using $\wih{\mathscr{B}}$ the truncated PML problem \eqref{espml} for the electric field
$\ch{\bm E}^{p}$ can be equivalently reduced to the boundary value problem in $\Om_1$:
\be\label{reduced_pml}
\begin{cases}
&\na\times\left[(\mu s)^{-1}\na\times\ch{\bm E}^{p}\right]+\vep s\ch{\bm E}^{p}
  =-\ch{\bm J}\;\;\gin\;\;\Om_{1},\\
&\n\times\ch{\bm E}^{p}=\0 \quad\on\;\;\G,\qquad
\wih{\mathscr{B}}[\ch{\bm E}^{p}_{\G_1}]=\n_1\times(\mu s)^{-1}\na\times\ch{\bm E}^{p}\quad\on\;\;\G_1.
\end{cases}
\en
Similarly, for the problem \eqref{reduced_pml} we can derive its equivalent variational formulation:
find $\ch{\bm E}^p\in H_{\G_1}(\curl,\Om_1)$ such that
\be\label{ahat}
\wih{a}\left(\ch{\bm{E}}^p,\bm{V})\right)=-\int_{\Om_1}\ch{\bm J}\cdot\ov{\bm{V}}dx,
\quad\forall\;\bm V\in H_{\G_1}(\curl,\Om_1),
\en
where the sesquilinear form $\wih{a}(\cdot,\cdot)$ is defined as
\be\label{ahat_form}
\wih{a}\left(\ch{\bm{E}}^p,\bm{V}\right)
:=\int_{\Om_1}\left[(\mu s)^{-1}(\na\times\ch{\bm{E}}^p)\cdot(\na\times\ov{\bm V})dx
+\vep s\ch{\bm{E}}^p\cdot\ov{\bm V}\right]dx
+\langle\wih{\mathscr{B}}[\ch{\bm E}_{\G_1}],\bm V_{\G_1}\rangle_{\G_1}.
\en
By using $\wih{\mathscr{B}}$ and the Laplace and inverse Laplace transform,
the truncated PML problem \eqref{tupml} is equivalent to the initial boundary value problem in $\Om_1$:
\begin{eqnarray}\label{tupml_reduced}
\begin{cases}
\ds\nabla\times\bm{E}^p+\mu\pa_t\bm{H}^p=\0&\gin\;\;\Om_{1}\times(0,T),\\
\ds\nabla\times\bm{H}^p-\vep\pa_t\bm{E}^p=\bm J&\gin\;\;\Om_{1}\times(0,T),\\
\ds\bm{n}\times\bm E^p=\0&\on\;\;\G\times(0,T),\\
\ds\bm E^p(x,0)=\bm H^p(x,0) = \0 & \gin\;\;\Om_1,\\
\ds\hat{\mathscr{T}}[\bm E^p_{\G_1}]=\bm H^p\times\n_1 &\on\;\;\G_1\times(0,T).
\end{cases}
\end{eqnarray}
Here, $\hat{\mathscr{T}}=\mathscr{L}^{-1}\circ\widehat{\mathscr{B}}\circ\mathscr{L}$ is the time-domain EtM operator
for the PML problem.

\subsection{Exponential convergence of the uniaxial PML method}

In this subsection, we prove the exponential convergence of the uniaxial PML method.
We begin with the following lemma which is useful in the proof of the exponential decay property
of the stretched fundamental solution $\wit{\Phi}_s(x,y)$.

\begin{lemma}\label{lem_d_upml}
Let $s=s_1+is_2$ with $s_1>0,\;s_2\in\R$. Then, for any $x\in\G_{2}$ and $y\in\G_1$
the complex distance $\rho_s$ defined by \eqref{funda_sol_upml} satisfies
\ben
|\rho_s(\wit{x},y)/s|\ge d,\;\;\;\;\Rt[\rho_s(\wit{x},y)]\ge\frac{\sig_0 d}{m+1}.
\enn
\end{lemma}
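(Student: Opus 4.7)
The plan is to exploit the fact that the stretching in this paper is \emph{real}: since we have chosen $\Rt(s)=s_1$, the coefficients $\al_j=1+s_1^{-1}\sig_j(x_j)$ are real and nonnegative, so the stretched coordinates $\wit{x}_j=\int_0^{x_j}\al_j(\tau)d\tau$ are real. Consequently $|\wit{x}-y|$ is an ordinary Euclidean distance in $\R^3$, and $\rho_s(\wit{x},y)/s=|\wit{x}-y|$ is a positive real number while $\Rt[\rho_s(\wit{x},y)]=s_1\,|\wit{x}-y|$.

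Next I would exploit the geometry: any $x\in\G_2$ has at least one coordinate index $j$ for which $|x_j|=L_j/2+d_j=L_j/2+d$. Because $\sig_j$ is even and nonnegative, $\wit{x}_j$ has the same sign as $x_j$ and its modulus is
\ben
|\wit{x}_j|=|x_j|+s_1^{-1}\int_0^{|x_j|}\sig_j(\tau)d\tau=L_j/2+d+s_1^{-1}\frac{\sig_0 d}{m+1},
\enn
where the second equality uses \eqref{int_sig}. Since $y\in\G_1$ implies $|y_j|\le L_j/2<|\wit{x}_j|$, the reverse triangle inequality gives
\ben
|\wit{x}-y|\ge|\wit{x}_j-y_j|\ge|\wit{x}_j|-|y_j|\ge d+s_1^{-1}\frac{\sig_0 d}{m+1}.
\enn

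Both conclusions then fall out immediately: the first inequality yields $|\rho_s(\wit{x},y)/s|=|\wit{x}-y|\ge d$, while multiplying by $s_1$ gives
\ben
\Rt[\rho_s(\wit{x},y)]=s_1|\wit{x}-y|\ge s_1d+\frac{\sig_0 d}{m+1}\ge\frac{\sig_0 d}{m+1}.
\enn

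There is no real obstacle here; the result is essentially a direct consequence of the evenness of $\sig_j$, the assumption $\wit\sig_1=\wit\sig_2=\wit\sig_3=\sig_0$, and the identity \eqref{int_sig}. The only mild subtlety is to note that one must pick the correct face index $j$ on $\G_2$ (not all coordinates of $x$ need lie in the PML strip), but this is unavoidable and automatic from the definition of $\G_2$.
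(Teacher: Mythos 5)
Your proof is correct and follows essentially the same route as the paper: both rest on the realness of the stretched coordinates, the box geometry of $\G_1,\G_2$, and the identity \eqref{int_sig}. The only cosmetic difference is that you extract both bounds from the single face coordinate $j$ with $|x_j|=L_j/2+d_j$ via the reverse triangle inequality, whereas the paper expands $|\wit{x}-y|^2$ over all coordinates and drops the nonnegative cross terms $s_1^{-1}x_j\hat{\sig}_j(x_j)(x_j-y_j)$; your version even gives the slightly stronger bound $\Rt[\rho_s]\ge s_1 d+\sig_0 d/(m+1)$.
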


\begin{proof}
For $x\in\G_{2}$ and $y\in\G_1$, $\wit{x}_j-y_j=(x_j-y_j)+s_1^{-1}x_j\hat{\sig}_j(x_j)$, where
\ben
\hat{\sig}_j(x_j)=\frac{1}{x_j}\int_0^{x_j}\sig_j(\tau) d\tau.
\enn
Then, by the definition of the complex distance $\rho_s(\wit{x},y)$ (see \eqref{funda_sol_upml}) we have
\ben
|\rho_s(\wit{x},y)/s|
& =|\wit{x}-y|=\sqrt{(\wit{x}_1-y_1)^2 + (\wit{x}_2-y_2)^2+(\wit{x}_3-y_3)^2} \\
& =\left(\sum_{j=1}^{3}\left[(x_j-y_j)^2+2s_1^{-1}x_j\hat{\sig}_j(x_j)(x_j-y_j)
  +s_1^{-2}x_j^2\hat{\sig}_j^2(x_j)\right]\right)^{1/2} \\
& \ge|x-y|\ge d,
\enn
where we have used the fact that $x_j\hat{\sig}_j(x_j)(x_j-y_j)\ge 0$ for $x\in\G_{2}$ and $y\in\G_1$.
In addition,
\ben
\Rt\left[\rho_s(\wit{x},y)\right]& =\Rt\left[s^2\left((\wit{x}_1-y_1)^2+(\wit{x}_2-y_2)^2
    +(\wit{x}_3-y_3)^2\right)\right]^{1/2}\\
& =s_1\sqrt{(\wit{x}_1-y_1)^2+(\wit{x}_2-y_2)^2+(\wit{x}_3-y_3)^2}\\
& \ge\left(\sum_{j=1}^{3} x_j^2\hat{\sig}_j^2(x_j)\right)^{1/2}.
\enn
If $x_j=\pm(L_j/2+d_j)\in\G_2$, then, by \eqref{int_sig} we have $|x_j\hat{\sig}_j(x_j)|=\sig_0 d/(m+1)$.
Thus, $\Rt\left[\rho_s(\wit{x},y)\right] \ge \sig_0 d/(m+1)$. The proof is thus complete.
\end{proof}

By Lemma \ref{lem_d_upml}, and arguing similarly as in the proof of \cite[Lemma 5.3]{wyz-sinum-2020},
we have similar estimates as in \cite[Lemma 5.3]{wyz-sinum-2020}
for the stretched dyadic Green's function $\wit{\mathbb{G}}$ in the PML layer.
Then we have the decay property of the PML extension (cf. \cite[Theorem 5.4]{wyz-sinum-2020}).

\begin{lemma}\label{lem_extension}
For any $\bm p,\;\bm q\in H^{-1/2}(\Dive,\G_1)$ let $\mathbb{E}(\bm p,\bm q)$ be the PML extension
in the $s$-domain defined in \eqref{extension_upml}. Then we have that for any $x\in\Om^{\PML}$,
\begin{eqnarray}\label{4.2d}
&&|\mathbb{E}(\bm p,\bm q)(x)|\\ \no
&&\les s_1^{-2}d^{1/2}(1+s_1^{-1}\sigma_0)^2e^{-\frac{\sqrt{\vep\mu}\sig_0d}{m+1}}
\left[(1+|s|)\|\bm q\|_{H^{-1/2}(\Dive,\G_1)}+(1+|s|^2)\|\bm p\|_{H^{-1/2}(\Dive,\G_1)}\right]
\end{eqnarray}
and
\begin{eqnarray}\label{4.2e}
&&|\curl_{\wit{x}}\;\mathbb{E}(\bm p,\bm q)(x)|\\ \no
&&\les d^{1/2}(1+s_1^{-1}\sigma_0)^3e^{-\frac{\sqrt{\vep\mu} \sig_0d}{m+1}}
\left[(1+|s|^2)\|\bm q\|_{H^{-1/2}(\Dive,\G_1)}+(1+|s|^3)\|\bm p\|_{H^{-1/2}(\Dive,\G_1)}\right].
\end{eqnarray}
\end{lemma}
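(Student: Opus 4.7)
The plan is to bound the two layer potentials comprising $\mathbb{E}(\bm p,\bm q)$ pointwise for $x\in\Om^{\PML}$, first by establishing uniform pointwise bounds on the stretched dyadic Green's function $\wit{\mathbb{G}}(s,x,y)$ and its relevant $y$-derivatives via the complex-distance estimates of Lemma \ref{lem_d_upml}, and then by interpreting the surface integrals in \eqref{extension_upml} as duality pairings against densities in $H^{-1/2}(\Dive,\G_1)$.

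First, I would insert the two bounds from Lemma \ref{lem_d_upml} into the explicit formula \eqref{funda_sol_upml} for $\wit{\Phi}_s(x,y)$. The estimate $|\rho_s(\wit x,y)/s|\ge d$ keeps the denominator away from zero, while $\Rt[\rho_s(\wit x,y)]\ge \sig_0 d/(m+1)$ yields
\[
\bigl|e^{-\sqrt{\vep\mu}\,\rho_s(\wit x,y)}\bigr|\le e^{-\sqrt{\vep\mu}\sig_0 d/(m+1)}.
\]
Differentiating in $y$ produces multiplicative factors bounded by $\sqrt{\vep\mu}\,|s|+1/d$, so each $y$-derivative costs at most a polynomial in $|s|$ (using the standing assumption $d\ge 1$) while preserving the exponential decay. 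Combining these with the definition \eqref{Green_upml} of $\wit{\mathbb{G}}$ (and noting that the second-derivative term is scaled by $k^{-2}=-1/(\vep\mu s^2)$) gives pointwise bounds on $\wit{\mathbb{G}}(s,x,y)$ and $\curl_y\wit{\mathbb{G}}(s,x,y)$ of exactly the form found in \cite[Lemma 5.3]{wyz-sinum-2020}, with the additional factors $(1+s_1^{-1}\sig_0)^{\alpha}$ for $\alpha\in\{2,3\}$ coming from the $\wit x$-derivatives of $\al_j$ that convert $\na_{\wit x}$ into $\mathbb{A}\,\na_x\mathbb{B}$ as in \eqref{BA}.

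Second, for fixed $x\in\Om^{\PML}$ the kernels $y\mapsto\wit{\mathbb{G}}^\top(s,x,y)$ and $y\mapsto(\curl_y\wit{\mathbb{G}})^\top(s,x,y)$ are smooth on $\G_1$ because $x$ is separated from $\G_1$ by distance at least $d\ge 1$. I would then bound the two duality pairings that define $\wit{\bm\Psi}_{\rm SL}(\bm q)(x)$ and $\wit{\bm\Psi}_{\rm DL}(\bm p)(x)$ using the continuous embeddings into $H^{-1/2}(\Curl,\G_1)$, so that the kernel norm is controlled by the surface $C^1$-norm of the Green's kernel times $|\G_1|^{1/2}\lesssim d$. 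Inserting the bounds from the first step, and tracking which terms produce $(1+|s|)$ versus $(1+|s|^2)$ factors (the double-layer kernel contains one extra $y$-derivative compared with the single-layer kernel), yields the estimate \eqref{4.2d}. For \eqref{4.2e} the same argument is applied to $\curl_{\wit x}\mathbb{E}(\bm p,\bm q)$, which via the Maxwell system \eqref{stretched_maxwell} trades one power of $s^{-1}$ for one power of $s$ in each contribution, producing the extra $|s|$ and the loss of the $s_1^{-2}$ prefactor.

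The main obstacle is the bookkeeping of three distinct exponents simultaneously: powers of $|s|$ coming from $y$-derivatives of the oscillatory factor and from $k^{-2}$; powers of $s_1^{-1}$ coming from the definition of $\al_j$; and powers of $(1+s_1^{-1}\sig_0)$ from the diagonal matrices $\mathbb{A}$, $\mathbb{B}$ appearing in the stretched curl. In particular the disappearance of the $s_1^{-2}$ factor between \eqref{4.2d} and \eqref{4.2e} has to be traced to the identity $\ch{\wit{\bm H}}=-(\mu s)^{-1}\wit{\curl}\,\ch{\wit{\bm E}}$. Once these exponents are aligned as in \cite[Theorem 5.4]{wyz-sinum-2020}, all polynomial-in-$d$ losses are absorbed by the exponential factor $e^{-\sqrt{\vep\mu}\sig_0 d/(m+1)}$ and the stated bounds follow.
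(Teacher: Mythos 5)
Your overall route is the same one the paper takes (and delegates to \cite[Lemma 5.3 and Theorem 5.4]{wyz-sinum-2020}): pointwise decay bounds for $\wit{\Phi}_s$ and the stretched dyadic kernel obtained from the complex-distance estimates of Lemma \ref{lem_d_upml}, followed by a duality pairing of the kernels in \eqref{extension_upml} against the $H^{-1/2}(\Dive,\G_1)$ densities. The gap is in how you justify applying those kernel bounds at the evaluation point. You assert that for $x\in\Om^{\PML}$ the point $x$ ``is separated from $\G_1$ by distance at least $d\ge 1$''. This is false: $\Om^{\PML}=B_{2}\ba\ov{B}_1$ abuts $\G_1$, so ${\rm dist}(x,\G_1)$ can be arbitrarily small, and Lemma \ref{lem_d_upml} provides the lower bounds $|\rho_s(\wit{x},y)/s|\ge d$ and $\Rt[\rho_s(\wit{x},y)]\ge \sig_0 d/(m+1)$ only for $x\in\G_2$, $y\in\G_1$. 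Since the statement to be proved is for \emph{every} $x\in\Om^{\PML}$, and the paper genuinely uses it over the whole layer (through the $L^{\ify}(\Om^{\PML})$ norms in \eqref{ep_curl}), the passage from the outer boundary $\G_2$ to points inside the layer is precisely the step that requires an argument; your proof does not supply it, and as written it establishes \eqref{4.2d}--\eqref{4.2e} only for $x$ on (or uniformly away from $\G_1$, e.g.\ near) $\G_2$, not on all of $\Om^{\PML}$.

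Two smaller bookkeeping problems. In \eqref{4.2d} no stretched derivative of the extension is taken, so the factor $(1+s_1^{-1}\sig_0)^2$ cannot come from ``converting $\na_{\wit{x}}$ into $\mathbb{A}\na\times\mathbb{B}$'' as you claim; it has to be traced through the kernel/duality estimates themselves. Your surface-measure count gives $|\G_1|^{1/2}\les d$, which produces $d$ where the lemma states $d^{1/2}$ (harmless for the final exponential convergence, but it does not reproduce the stated bound). Finally, ``trading $s^{-1}$ for $s$'' via $\ch{\wit{\bm H}}=-(\mu s)^{-1}\wit{\curl}\,\ch{\wit{\bm E}}$ accounts for the extra power of $|s|$ in \eqref{4.2e}, but not for the complete disappearance of the $s_1^{-2}$ prefactor nor for the additional factor $(1+s_1^{-1}\sig_0)$; these exponents need to be derived, not inferred from the target statement.
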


We now establish the $L^{2}$-norm and $L^{\ify}$-norm error estimates in time between solutions to
the original scattering problem and the truncated PML problem (\ref{tupml}) in the computational domain $\Om_1$.

\begin{theorem}\label{thm_convergence_upml}
Let $(\bm E,\bm H)$ and $(\bm E^p,\bm H^p)$ be the solutions of the problems \eqref{maxwell1}-\eqref{smrc}
and $(\ref{tupml})$ with $s_1=1/T$, respectively. If the assumptions $(\ref{assump_upml})$ and
$(\ref{assump1_upml})$ are satisfied, then we have the error estimates
\be\label{error_estimate}
&\|\bm E-\bm E^p\|_{L^2(0,T;H(\curl,\Om_1))}+\|\bm H-\bm H^p\|_{L^2(0,T;H(\curl,\Om_1))}\\
&\;\;\les T^{5}d^2(1+\sigma_0T)^{15}e^{-\sig_0d\sqrt{\vep\mu}/2}\|\bm J\|_{H^{10}(0,T;L^2(\Om_1)^3)}.
\en
and
\be\label{Linfinity_estimate}
&\|\bm E-\bm E^p\|_{L^\ify(0,T;H(\curl,\Om_1))}+\|\bm H-\bm H^p\|_{L^\ify(0,T;H(\curl,\Om_1))}\\
&\;\;\les T^{11/2}d^2(1+\sig_0T)^{15}e^{-\sig_0d\sqrt{\vep\mu}/2}\|\bm J\|_{H^{9}(0,T;L^2(\Om_1)^3)}.
\en
\end{theorem}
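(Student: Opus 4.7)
The plan is to split the two estimates by technique: the $L^2$-in-time bound via a Laplace-domain EtM operator error estimate combined with Plancherel, and the $L^\ify$-in-time bound via a direct time-domain energy method with tailored test functions.

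\textbf{Step 1 (EtM operator error).} I would first establish a quantitative exponential bound on $\wih{\mathscr{B}}-\mathscr{B}$. Given any $\bm\phi\in H^{-1/2}(\Curl,\G_1)$, let $\ch{\bm E}_1$ be the exterior Laplace-domain solution of \eqref{s_exterior_u} with $\g_T\ch{\bm E}_1=\bm\phi$. Its PML extension $\bm u:=\mathbb{B}\,\mathbb{E}(\g_t\ch{\bm E}_1,\g_t(\curl\ch{\bm E}_1))$ solves \eqref{eq_pml_1} in $\Om^{\PML}$ with tangential trace $\bm\phi$ on $\G_1$ and a small tangential trace $\bm\xi$ on $\G_2$ whose norm, by Lemma~\ref{lem_extension}, carries the factor $d^{1/2}(1+s_1^{-1}\sigma_0)^3 e^{-\sqrt{\vep\mu}\sigma_0 d/(m+1)}$ times a polynomial in $|s|$. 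The truncated PML field $\bm v$ solves \eqref{eq_pml_1} with the same trace on $\G_1$ but zero on $\G_2$; hence $\bm u-\bm v$ solves \eqref{eq_pml_1} with $\bm\la=\bm 0$ and data $\bm\xi$, and Lemma~\ref{lem_stability_pml} together with the trace theorem bounds $\|(\wih{\mathscr{B}}-\mathscr{B})\bm\phi\|_{H^{-1/2}(\Dive,\G_1)}$ by the same exponentially small quantity, multiplied by a polynomial in $|s|$ and by $\|\bm\phi\|_{H^{-1/2}(\Curl,\G_1)}$.

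\textbf{Step 2 ($L^2$ estimate).} Subtracting \eqref{a} and \eqref{ahat} tested against $\bm V\in H_{\G}(\curl,\Om_1)$ yields $\wih a(\ch{\bm E}-\ch{\bm E}^p,\bm V)=\langle(\wih{\mathscr{B}}-\mathscr{B})\ch{\bm E}_{\G_1},\bm V_{\G_1}\rangle_{\G_1}$. Choosing $\bm V=\ch{\bm E}-\ch{\bm E}^p$ and using the coercivity of $\wih a$ --- for which $\Rt\langle\wih{\mathscr{B}}\bm\om,\bm\om\rangle_{\G_1}\ge 0$ is obtained by mimicking Lemma~\ref{lem_etm} with $B_R\ba\ov B_1$ replaced by $\Om^{\PML}$ and using \eqref{coer_apml} in place of the Silver--M\"uller radiation --- together with the trace bound on $\|\ch{\bm E}_{\G_1}\|$ furnished by \eqref{coer_a}, I obtain a Laplace-domain error bound. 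Setting $s_1=1/T$ and applying the Plancherel identity $\int_0^T e^{-2t/T}\|\bm w(t)\|^2\,dt\les\int_{\R}\|\ch{\bm w}(1/T+is_2)\|^2\,ds_2$ converts this into \eqref{error_estimate}; the polynomial weights in $|s|$ are absorbed into $\|\bm J\|_{H^{10}(0,T;L^2(\Om_1)^3)}$ via $(is)^k\ch{\bm J}=\ch{\pa_t^k\bm J}$, valid by the vanishing initial data in \eqref{assump_upml}.

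\textbf{Step 3 ($L^\ify$ estimate and main obstacle).} For \eqref{Linfinity_estimate} I work directly in time. Setting $(\bm e,\bm h)=(\bm E-\bm E^p,\bm H-\bm H^p)$, the difference of \eqref{reduced} and \eqref{tupml_reduced} yields the homogeneous Maxwell system in $\Om_1\times(0,T)$ with a boundary defect $(\wih{\mathscr{T}}-\mathscr{T})[\bm E_{\G_1}]$ on $\G_1$. Testing with $\pa_t\bm e$ and integrating over $(0,\xi)$ for $\xi\in[0,T]$, the boundary contribution splits into a nonnegative piece controlled by the analogue of Lemma~\ref{lem_positive1} for $\wih{\mathscr{T}}$ (whose positivity descends from Step~1) plus a defect controlled by $\sup_{[0,T]}\|(\wih{\mathscr{T}}-\mathscr{T})[\bm E_{\G_1}](t)\|$. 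A Gr\"onwall argument produces the pointwise-in-time estimate, and the defect's $L^\ify$-in-time norm is bounded by the Step~1 Laplace-domain quantity combined with the embedding $\|u\|_{L^\ify(0,T)}\les\|u\|_{H^1(0,T)}$ on the inverse Laplace side, costing one extra time derivative of $\bm J$ (hence $H^9$ here, $H^{10}$ there). The main obstacle is Step~1: tracking the $|s|$-dependence through Lemmas~\ref{lem_stability_pml} and \ref{lem_extension} to obtain a final polynomial exponent absorbable by ten (resp.\ nine) time derivatives of $\bm J$, and verifying positivity of $\Rt\langle\wih{\mathscr{B}}\cdot,\cdot\rangle_{\G_1}$ in the bounded PML-layer setting where the radiation boundary term is replaced by a $\G_2$ contribution that must be controlled by the coercivity estimate \eqref{coer_apml} rather than by decay at infinity.
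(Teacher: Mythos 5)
Your Steps 1--2 are essentially the paper's own $L^2$ argument in mirror image: the paper writes the Galerkin-type identity \eqref{auu} with the coercive form $a$ and the defect $(\wih{\mathscr{B}}-\mathscr{B})$ acting on $\ch{\bm E}^p_{\G_1}$ (bounded through Lemma \ref{lem_ep}), whereas you use $\wih{a}$ and the defect acting on $\ch{\bm E}_{\G_1}$; your variant is workable but needs the extra positivity $\Rt\langle\wih{\mathscr{B}}\bm\om,\bm\om\rangle_{\G_1}\ge 0$ (true here because the stretching is real, so $\mathbb{BA}$ is real positive definite and there is no $\G_2$ boundary term thanks to the homogeneous condition there), which the paper's choice of splitting avoids altogether. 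The chain PML-extension decay (Lemma \ref{lem_extension}) plus layer stability (Lemma \ref{lem_stability_pml}) plus trace theorem plus Parseval is exactly \eqref{error_beta}--\eqref{eh_terror}, so the $L^2$ part is fine.

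The genuine gap is in Step 3. After eliminating $\bm h$, the boundary condition for the error carries a time derivative (cf.\ \eqref{eqn_U}), so testing with $\pa_t\bm e$ produces the defect term $\Rt\int_0^\xi\langle(\hat{\mathscr{T}}-\mathscr{T})[\pa_t\bm E_{\G_1}],(\pa_t\bm e)_{\G_1}\rangle_{\G_1}\,dt$, which pairs the defect with the tangential trace of $\pa_t\bm e$; by the trace theorem this is controlled only by $\|\pa_t\bm e\|_{H(\curl,\Om_1)}$, and the curl part $\|\na\times\pa_t\bm e\|_{L^2(\Om_1)^3}$ is \emph{not} controlled by the energy $\|\pa_t\bm e\|^2_{L^2}+\|\na\times\bm e\|^2_{L^2}$. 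Hence neither direct absorption nor a Gr\"onwall argument with $\sup_{[0,T]}\|(\hat{\mathscr{T}}-\mathscr{T})[\bm E_{\G_1}](t)\|$ closes the estimate as you claim; you must move time derivatives off the test function. This is precisely why the paper uses the integrated test functions $\bm\Psi_1,\bm\Psi_2$, notes explicitly after \eqref{3.39} that the first energy identity alone cannot be closed, differentiates the whole system in time (\eqref{eqn_U_t}), and ends up requiring $(\hat{\mathscr{T}}-\mathscr{T})[\pa_t\bm E^p_{\G_1}]$ and $(\hat{\mathscr{T}}-\mathscr{T})[\pa_t^2\bm E^p_{\G_1}]$ in $L^2(0,T;H^{-1/2}(\Dive,\G_1))$, not the undifferentiated defect in $L^\ify(0,T)$. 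Your derivative bookkeeping (``one extra derivative, hence $H^9$ here, $H^{10}$ there'') does not follow from the argument you sketch: in the paper the $H^9$ arises from feeding two time derivatives of $\bm E^p$ through the $s$-domain chain, while the $H^{10}$ in the $L^2$ bound comes from the coercivity/recovery factor $s_1^{-1}(|s|^{-1}+|s|^3)$ in \eqref{em_error}. Finally, with your splitting the ``good'' boundary term is $\hat{\mathscr{T}}[\bm e_{\G_1}]$, so you need analogues of Lemmas \ref{lem_positive}--\ref{lem_positive1} for $\hat{\mathscr{C}}=\mathscr{L}^{-1}\circ s\wih{\mathscr{B}}\circ\mathscr{L}$; these do not ``descend from Step 1'' (an operator-difference bound gives no sign information) but from the positivity of $\wih{\mathscr{B}}$, again a consequence of the real stretching that must be proved separately, whereas the paper's splitting keeps $\mathscr{T}[\bm U_{\G_1}]$ as the good term and uses only the already-established Lemmas \ref{lem_positive}--\ref{lem_positive1}.
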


\begin{proof}
We first prove \eqref{error_estimate}. Let $\bm U=\bm E-\bm E^p$ and $\bm V=\bm H-\bm H^p$ and let $\ch{\bm E}$
and $\ch{\bm E}^p$ be the solutions to the variational problems \eqref{a} and \eqref{ahat}, respectively.
Then, by \eqref{a} and \eqref{ahat} we get
\be\label{auu}
a(\ch{\bm U},\ch{\bm U})=\wih{a}(\ch{\bm E},\ch{\bm U})-a(\ch{\bm E}^p,\ch{\bm U})
=\langle(\wih{\mathscr{B}}-\mathscr{B})[\ch{\bm E}^p_{\G_1}],\ch{\bm U}_{\G_1}\rangle_{\G_1}.
\en
This, together with the uniform coercivity \eqref{coer_a} of $a(\cdot,\cdot)$, implies that
\be\label{u_norm}
\|\ch{\bm U}\|_{H(\curl,\Om_1)}\les s_1^{-1}\left(1+|s|^{2}\right)
\|(\wih{\mathscr{B}}-\mathscr{B})[\ch{\bm E}^p_{\G_1}]\|_{H^{-1/2}(\Dive,\G_1)}.
\en
From the Maxwell equations in $\Om_1$ obtained by taking the Laplace transform of the
problems \eqref{maxwell1}-\eqref{smrc} and $(\ref{tupml})$, it follows that
\ben
\|\ch{\bm V}\|_{H(\curl,\Om_1)}\les(|s|+|s|^{-1})\|\ch{\bm U}\|_{H(\curl,\Om_1)}.
\enn
This, combined with \eqref{u_norm}, leads to the result
\begin{align}
&\|\ch{\bm U}\|_{H(\curl,\Om_1)}+\|\ch{\bm V}\|_{H(\curl,\Om_1)}\no\\ \label{em_error}
&\qquad\les s_1^{-1}(|s|^{-1}+|s|^3)\|(\wih{\mathscr{B}}-\mathscr{B})[\ch{\bm E}^p_{\G_1}]\|_{H^{-1/2}(\Dive,\G_1)}.
\end{align}
We now estimate the norm $\|(\wih{\mathscr{B}}-\mathscr{B})[\ch{\bm E}^p_{\G_1}]\|_{H^{-1/2}(\Dive,\G_1)}$.
For $\ch{\bm E}^p|_{\G_1}$ define its PML extension $\ch{\wit{\bm E}^p}$ in the $s$-domain to be
the solution of the exterior problem
\begin{equation*}
\begin{cases}
\ds\wit{\na}\times[(\mu s)^{-1}\wit{\na}\times\bm v]+\vep s\bm v=\0 & \gin\;\;\;\R^3\ba\ov{B}_1,\\
\ds\n_1\times\bm v=\n_1\times\ch{\bm E}^p & \on\;\;\;\G_{1},\\
\ds\hat{x}\times(\mu s\bm{v}\times\hat{x})-\hat{x}\times(\wit{\na}\times\bm{v})=o\left(|\wit{x}|^{-1}\right)
&{\rm as}\;\;\;|\wit{x}|\ra\infty.
\end{cases}
\end{equation*}
By \cite[Theorem 12.2]{Monk2003} it is easy to see that $\ch{\wit{\bm E}^p}$ has the integral representation
\ben
\ch{\wit{\bm E}^p}=\mathbb{E}(\g_t(\ch{\bm E}^p),\g_t(\wit{\curl}\;\ch{\wit{\bm E}^p})).
\enn
Define $\ds\ch{\wit{\bm H}^p}:=-(\mu s)^{-1}\wit{\curl}\ch{\wit{\bm E}^p}$.
Then $(\ch{\wit{\bm E}^p},\ch{\wit{\bm H}^p})$ satisfies the stretched Maxwell equations
in \eqref{stretched_maxwell} in $\R^3\ba\ov{B}_{1}$.
It is worth noting that $\ch{\wit{\bm H}^p}$ is not the extension of $\ch{\bm H^p}|_{\G_1}$.

Noting that $\wit{\na}\times\bm v=\mathbb{A}\na\times\mathbb{B}\bm v$, we know that
$\mathbb{B}\ch{\wit{\bm E}^p}$ satisfies the problem
\begin{equation*}
\begin{cases}
\ds\nabla\times\left[(\mu s)^{-1}\mathbb{BA}\nabla\times\bm v\right]+\vep s(\mathbb{BA})^{-1}\bm v=\0
& \gin\;\;\;\R^3\ba\ov{B}_1,\\
\ds\n_1\times\bm v=\n_1\times\ch{\bm E}^p & \on\;\;\;\G_1,\\
\ds\hat{x}\times(\mu s\mathbb{B}^{-1}\bm{v}\times\hat{x})-\hat{x}\times(\mathbb{A}\na\times\bm v)
=o\left(|\wit{x}|^{-1}\right) &{\rm as}\;\;\;|\wit{x}|\ra\infty,
\end{cases}
\end{equation*}
where we have used the fact that $\ch{\wit{\bm E}^p}$ is the extension of $\ch{\bm E}^p|_{\G_1}$
and $\mathbb{B}=\diag\{1,1,1\}$ on $\G_1$. By the definition of $\mathscr{B}$, and since
$\mathbb{A}=\diag\{1,1,1\}$ on $\G_1$, it is easy to see that
\ben
\mathscr{B}[\ch{\bm E}^p_{\G_1}]=\n_1\times(\mu s)^{-1}\wit{\na}\times\ch{\wit{\bm E}^p}
=\n_1\times(\mu s)^{-1}\na\times\mathbb{B}\ch{\wit{\bm E}^p}.
\enn
By the definition of $\wih{\mathscr{B}}$ in \eqref{etm_pml}, we obtain that
\be\label{error_beta}
(\wih{\mathscr{B}}-\mathscr{B})[\ch{\bm E}^p_{\G_1}]=\n_1\times(\mu s)^{-1}\na\times\bm\om
\en
where $\bm\om$ satisfies
\ben
&\na\times\left[(\mu s)^{-1}\mathbb{BA}\na\times{\bm\om}\right]+\vep s(\mathbb{BA})^{-1}{\bm\om}
=0\quad\gin\;\; \Om^{\PML}, \\
&\n_1\times{\bm\om}=\0\quad\on\;\;\G_1,\qquad
\n_2\times{\bm\om}=\g_t(\mathbb{B}\ch{\wit{\bm E}^p})\quad\on\;\;\G_{2}.
\enn
By Lemma \ref{lem_stability_pml} and the estimate for $\mathbb{BA}$ and $(\mathbb{BA})^{-1}$
in \eqref{estimate_BA}-\eqref{estimate_BA1}, we have
\begin{align}
&\|\n_1\times(\mu s)^{-1}\na\times\bm\om\|_{H^{-1/2}(\Dive,\G_1)}\no \\
&\les (1+s_1^{-1}\sig_0)^2\|(\mu s)^{-1}\mathbb{BA}\na\times\bm\om\|_{H(\curl,\Om^{\PML})}\no\\
&\les (1+s_1^{-1}\sig_0)^2\left(\frac{(1+s_1^{-1}\sig_0)^2}{|s|^2}\|\na\times\bm\om\|^2_{L^2(\Om^{\PML})^3}
     +(1+s_1^{-1}\sig_0)^4\|s\bm\om\|^2_{L^2(\Om^{\PML})^3}\right)^{1/2}\no \\ \label{curl_om}
&\les s_1^{-1}(1+s_1^{-1}\sig_0)^8(1+|s|)^2\|\g_t(\mathbb{B}\ch{\wit{\bm E}^p})\|_{H^{-1/2}(\Dive,\G_2)}.
\end{align}
Since $\wit{\na}\times\bm v=\mathbb{A}\na\times\mathbb{B}\bm v$ and $|\mathbb{A}^{-1}|\le(1+\sig_0)^2$
in $\Om^{\PML}$, we have by the boundedness of the trace operator $\g_t$ that
\be\label{gtbep}
\|\g_t(\mathbb{B}\ch{\wit{\bm E}^p})\|_{H^{-1/2}(\Dive,\G_2)}
\les\|\mathbb{B}\ch{\wit{\bm E}^p}\|_{H(\curl,\Om^{\PML})}
\les (1+s_1^{-1}\sig_0)^2\|\ch{\wit{\bm E}^p}\|_{H(\wit{\curl},\Om^{\PML})}.
\en
By Lemma \ref{lem_extension} and the boundedness of $\g_T$ and $\g_t$ it is derived that
\begin{align}
\|\ch{\wit{\bm E}^p}\|^2_{H(\wit{\curl},\Om^{\PML})}&\le(\|\ch{\wit{\bm E}^p}\|^2_{L^{\infty}(\Om^{\PML})}
+\|\wit{\curl}\ch{\wit{\bm E}^p}\|^2_{L^{\infty}(\Om^{\PML})})|\Om^{\PML}|\no\\
&\les s_1^{-4}d^4(1+s_1^{-1}\sig_0)^6 e^{-2\frac{\sqrt{\vep\mu}\sig_0d}{m+1}}
\Big[(1+|s|^4)\|\g_t(\wit{\curl}\,\ch{\wit{\bm E}^p})\|^2_{H^{-1/2}(\Dive,\G_1)} \no\\
&\;\;\;+(1+|s|^6)\|\g_t\ch{\bm E}^p\|^2_{H^{-1/2}(\Dive,\G_1)}\Big] \no\\
&\les s_1^{-4}d^4(1+s_1^{-1}\sigma_0)^{6}e^{-2\frac{\sqrt{\vep\mu}\sig_0d}{m+1}}
\Big[(1+|s|^4)^2\|\g_T\ch{\wit{\bm E}^p}\|^2_{H^{-1/2}(\Curl,\G_1)} \no\\
&\qquad\qquad\;\;\;+(1+|s|^6)\|\g_t\ch{\bm E}^p\|^2_{H^{-1/2}(\Dive,\G_1)}\Big] \no\\
&\les s_1^{-4}d^4(1+s_1^{-1}\sigma_0)^{6}e^{-2\frac{\sqrt{\vep\mu}\sig_0d}{m+1}}
\sum_{l=0}^{4}\|s^l\ch{\bm E}^p\|^2_{H(\curl,\;\Om_1)} \no\\ \label{ep_curl}
&\les s_1^{-6}d^4(1+s_1^{-1}\sigma_0)^{10}e^{-2\frac{\sqrt{\vep\mu}\sig_0d}{m+1}}
\sum_{l=0}^5\|s^{l}\ch{\bm J}\|^2_{L^2(\Om_1)^3},
\end{align}
where we have used Lemma \ref{lem_ep} and the upper bound estimate (\ref{bound_etm}) of the EtM
operator $\mathscr{B}$. Combining \eqref{em_error}-\eqref{ep_curl} yields that
\begin{align}
&\|\ch{\bm U}\|^2_{H(\curl,\Om_1)}+\|\ch{\bm V}\|^2_{H(\curl,\Om_1)}\no\\
&\;\;\;\les s_1^{-10}d^4(1+s_1^{-1}\sigma_0)^{30}e^{-2\frac{\sqrt{\vep\mu}\sig_0d}{m+1}}
\sum_{l=0}^{10}\|s^{l}\ch{\bm J}\|^2_{L^2(\Om_1)^3}.
\end{align}
This, together with the Parseval identity for the Laplace transform (see \cite[(2.46)]{Cohen2007})
\ben
\frac{1}{2\pi}\int_{-\infty}^{\infty}\ch{\bm{u}}(s)\cdot\ch{\bm{v}}(s)ds_2
=\int_0^{\infty}e^{-2s_1t}\bm{u}(t)\cdot\bm{v}(t)dt,
\enn
for all $s_1>\la$, where $\la$ is the abscissa of convergence for $\ch{\bm{u}}$ and $\ch{\bm{v}}$,
gives
\begin{align}
&\|\bm U\|^2_{L^2(0,T;H(\curl,\Om_1))}+\|\bm V\|^2_{L^2(0,T;H(\curl,\Om_1))} \no\\
&\;\;=\int_{0}^{T}\left(\|\bm U\|^2_{H(\curl,\Om_1)}+\|\bm V\|^2_{H(\curl,\Om_1)}\right)dt\no\\
&\;\;\le e^{2s_1T}\int_{0}^{\infty}e^{-2s_1t}\left(\|\bm U\|^2_{H(\curl,\Om_1)}
  +\|\bm V\|^2_{H(\curl,\Om_1)}\right)dt\no\\
&\;\;\les e^{2s_1T}\int_{0}^{\infty}s_1^{-10}d^4(1+s_1^{-1}\sigma_0)^{30}e^{-2\frac{\sqrt{\vep\mu}\sig_0d}{m+1}}
  \sum_{l=0}^{10}\|s^{l}\ch{\bm J}\|^2_{L^2(\Om_1)^3} ds_2\no\\ \label{eh_terror}
&\;\;\les e^{2s_1T}s_1^{-10}d^4(1+s_1^{-1}\sigma_0)^{30}e^{-2\frac{\sqrt{\vep\mu}\sig_0d}{m+1}}
  \|\bm J\|^2_{H^{10}(0,T;L^2(\Om_1)^3)},
\end{align}
where we have used the assumptions (\ref{assump_upml}) and (\ref{assump1_upml}) to get the last inequality.
It is obvious that $m$ should be chosen small enough to ensure rapid convergence (thus we need to take $m=1$).
Since $s_1^{-1}=T$ in (\ref{eh_terror}), we obtain the required estimate \eqref{error_estimate} by using
the Cauchy-Schwartz inequality.

We now prove \eqref{Linfinity_estimate}. Since $(\bm E,\bm H)$ and $(\bm E^p,\bm H^p)$ satisfy
the equations \eqref{reduced} and \eqref{tupml_reduced}, respectively, it is easy to verify
that $(\bm U,\bm V)$ satisfies the problem
\begin{equation}\label{eqn_U_V}
  \begin{cases}
    \na\times\bm{U} +\mu\pa_t\bm{V} = \0 & \gin\;\;\;\Om_1\times(0,T),\\
    \na\times\bm{V} -\vep\pa_t\bm{U} = \bm 0 & \gin\;\;\;\Om_1\times(0,T),\\
    \n\times\bm{U} =\0 & \on\;\;\;\G\times(0,T),\\
    \bm U(x,0)=\bm V(x,0) = \0 & \gin\;\;\;\Om_1,\\
    \bm V\ti\n_1=(\mathscr{T}-\hat{\mathscr{T}})[\bm E^p_{\G_1}]+\mathscr{T}[\bm U_{\G_1}] & \on\;\;\;\G_1\times(0,T).
  \end{cases}
\end{equation}
Eliminating $\bm V$ yields that
\begin{equation}\label{eqn_U}
\begin{cases}
\na\times(\mu^{-1}\na\times\bm{U}) +\vep\pa_t^2\bm{U} = \0 & \gin\;\;\;\Om_1\times(0,T),\\
\n\times\bm{U} =\0 & \on\;\;\;\G\times(0,T),\\
\bm U(x,0)=\pa_t\bm U(x,0) = \0 & \gin\;\;\;\Om_1,\\
\mu^{-1}(\na\times\bm{U})\times\n_1+\mathscr{C}[\bm U_{\G_1}]=(\hat{\mathscr{T}}-\mathscr{T})[\pa_t\bm E^p_{\G_1}]
   & \on\;\;\;\G_1\times(0,T),
\end{cases}
\end{equation}
where $\mathscr{C}=\mathscr{L}^{-1}\circ s\mathscr{B}\circ \mathscr{L}$.
The variational problem of \eqref{eqn_U} is to find $\bm U\in H_{\G}(\curl,\Om_1)$ for all $t>0$ such that
\begin{align}\label{variation_td}
\int_{\Om_1}\vep\pa_t^2\bm U\cdot\ov{\bm\om}dx= &-\int_{\Om_1}\mu^{-1}(\na\times\bm{U})(\na\times\ov{\bm{\om}})dx\\ \no
&+\int_{\G_1}(\hat{\mathscr{T}}-\mathscr{T})[\pa_t\bm E^p_{\G_1}]\cdot\ov{\bm{\om}}_{\G_1}d\g
  -\int_{\G_1}\mathscr{C}[\bm U_{\G_1}]\cdot\ov{\bm{\om}}_{\G_1} d\g,\quad\forall\bm{\om}\in H_{\G}(\curl,\Om_1).
\end{align}
For $0<\xi<T$, introduce the auxiliary function
\begin{align*}
{\bm\Psi}_1(x,t)=\int_t^\xi\bm U(x,\tau)d\tau,\quad x\in\Om_1,0\leq t\leq\xi.
\end{align*}
Then it is easy to verify that
\begin{align}\label{3.28}
 {\bm\Psi}_1(x,\xi)=0,\quad\pa_t{\bm\Psi}_1(x,t)=-\bm U(x,t).
\end{align}
For any $\bm\phi(x,t)\in L^2\left(0,\xi;L^2(\Om_1)^3\right)$, using integration by parts and
condition (\ref{3.28}), we have
\begin{align}\label{3.29}
\int_0^\xi\bm\phi(x,t)\cdot\ov{{\bm\Psi}}_1(x,t)dt
 =\int_0^\xi\int_0^t\bm\phi(x,\tau)d\tau\cdot\ov{\bm U}(x,t)dt.
\end{align}
Taking the test function $\bm\om={\bm\Psi}_1$ in \eqref{variation_td} and using (\ref{3.28}) give
\begin{align}\label{3.30}
\Rt\int_0^\xi\int_{\Om_1}\vep\pa_t^2\bm U\cdot\ov{{\bm\Psi}}_1dxdt
=&\Rt\int_{\Om_1}\int_0^\xi\vep\Big(\pa_t(\pa_t\bm U\cdot\ov{{\bm\Psi}}_1)+\pa_t\bm U\cdot\ov{\bm U}\Big)dtdx\no\\
=&\frac{1}{2}\|\sqrt{\vep}\bm U(\cdot,\xi)\|_{L^2(\Om_1)^3}^2.
\end{align}
By (\ref{3.29}) we have the estimate
\begin{align*}
&\Rt\int_0^\xi\int_{\Om_1}\mu^{-1}(\na\times\bm U)\cdot(\na\ti\ov{{\bm\Psi}}_1)dxdt\\
&=\Rt\int_{\Om_1}\int_0^\xi\mu^{-1}(\na\times\bm U)\cdot\int_t^\xi(\na\times\ov{\bm U}(x,\tau))d\tau dt dx\\
&=\int_{\Om_1}\mu^{-1}\Big|\int_0^\xi(\na\times\bm U)(x,t)dt\Big|^2dx
-\Rt\int_0^\xi\int_{\Om_1}\mu^{-1}(\na\times\bm U)\cdot(\na\times\ov{{\bm\Psi}}_1)dxdt,
\end{align*}
which implies that
\begin{align}\label{3.31}
\Rt\int_0^\xi\int_{\Om_1}\mu^{-1}(\na\ti\bm U)\cdot(\na\ti\ov{{\bm\Psi}}_1)dxdt
=\frac{1}{2}\int_{\Om_1}\mu^{-1}\Big|\int_0^\xi\na\ti\bm U(x,t)dt\Big|^2dx.
\end{align}
Integrating \eqref{variation_td} from $t=0$ to $t=\xi$ and taking the real parts yield
\begin{align}\label{3.37}
&\frac{1}{2}\|\sqrt{\vep}\bm U(\cdot,\xi)\|_{L^2(\Om_1)^3}^2
 +\frac{1}{2}\int_{\Om_1}\mu^{-1}\Big|\int_0^\xi\na\ti\bm U(x,t)dt\Big|^2\no\\
&=\Rt\int_0^\xi\int_{\G_1}(\hat{\mathscr{T}}-\mathscr{T})[\pa_t\bm E^p_{\G_1}]\cdot{\ov{{\bm\Psi}}_1}_{\G_1}d\g dt
  -\Rt\int_0^\xi\int_{\G_1}\mathscr{C}[\bm U_{\G_1}]\cdot{\ov{{\bm\Psi}}_1}_{\G_1} d\g dt.
\end{align}
First, using (\ref{3.29}) and Lemma \ref{lem_positive}, we have
\begin{align}\label{3.38}
\Rt\int_0^\xi\int_{\G_1} \mathscr{C}[\bm U_{\G_1}] \cdot {\ov{{\bm\Psi}}_1}_{\G_1} d\g dt
=\Rt\int_{\G_1}\int_0^\xi\left(\int_0^t\mathscr{C}[\bm U_{\G_1}](x,\tau)d\tau\right)\cdot\ov{\bm U}_{\G_1}(x,t)dt\,d\g\ge 0.
\end{align}
Then, and by \eqref{3.29} we deduce the estimate
\be\label{3.39}
&\frac{1}{2}\|\sqrt{\vep}\bm U(\cdot,\xi)\|_{L^2(\Om_1)^3}^2
 +\frac{1}{2}\int_{\Om_1}\mu^{-1}\Big|\int_0^\xi\na\ti\bm U(x,t)dt\Big|^2 dx\\
&\le\Rt\int_0^\xi\int_{\G_1}(\hat{\mathscr{T}}-\mathscr{T})[\pa_t\bm E^p_{\G_1}]\cdot{\ov{{\bm\Psi}}_1}_{\G_1}d\g dt\\
&=\Rt\int_0^\xi\int_{\G_1}\left(\int_0^t(\hat{\mathscr{T}}-\mathscr{T})[\pa_\tau\bm E^p_{\G_1}]d\tau\right)
\ov{\bm U}_{\G_1}(x,t) d\g dt\\
&\les\left(\int_0^\xi\|(\hat{\mathscr{T}}-\mathscr{T})[\pa_t\bm E^p_{\G_1}](\cdot,t)\|_{H^{-1/2}(\Dive,\G_1)}dt\right)
\left(\int_0^\xi\|\bm U(\cdot,t)\|_{H(\curl,\Om_1)} dt\right),
\en
where we have used the trace theorem to get the last inequality.
The right-hand of \eqref{3.39} contains the term
\ben
\int_0^\xi\|\bm U(\cdot,t)\|_{H(\curl,\Om_1)} dt=\int_0^\xi\left(\|\bm U(\cdot,t)\|^2_{L^2(\Om_1)^3}
+\|\na\times\bm U(\cdot,t)\|^2_{L^2(\Om_1)^3}\right)^{\frac{1}{2}}dt
\enn
which cannot be controlled by the left-hand of \eqref{3.39}. To address this issue, we consider the new problem
\begin{equation}\label{eqn_U_t}
\begin{cases}
\na\times(\mu^{-1}\na\times(\pa_t\bm{U})) +\vep\pa_t^2(\pa_t\bm{U})=\0 & \gin\;\;\;\Om_1\times(0,T),\\
\n\times\pa_t\bm{U} =\0 & \on\;\;\;\G\times(0,T),\\
\pa_t\bm{U}(x,0)=\pa_t^2\bm U(x,0) = \0 & \gin\;\;\;\Om_1,\\
\mu^{-1}(\na\times(\pa_t\bm{U}))\times\n_1+\mathscr{C}[\pa_t\bm U_{\G_1}]
=(\hat{\mathscr{T}}-\mathscr{T})[\pa_t^2\bm E^p_{\G_1}] & \on\;\;\;\G_1\times(0,T),
\end{cases}
\end{equation}
which is obtained by differentiating each equation of \eqref{eqn_U} with respect to $t$.
By a similar argument as in deriving \eqref{variation_td}, we obtain the variational formulation
of \eqref{eqn_U_t}: find $u$ such that for all $\bm{\om}\in H_{\G}(\curl,\Om_1)$,
\begin{align}\no
\int_{\Om_1}\vep\pa_t^2(\pa_t\bm U)\cdot\ov{\bm\om}dx
&=-\int_{\Om_1}\mu^{-1}(\na\times(\pa_t\bm{U}))(\na\times\ov{\bm{\om}})dx\\ \label{variation_td1}
&\;\;\;+\int_{\G_1} (\hat{\mathscr{T}}-\mathscr{T})[\pa_t^2\bm E^p_{\G_1}]\cdot\ov{\bm{\om}}_{\G_1}d\g
  -\int_{\G_1}\mathscr{C}[\pa_t\bm U_{\G_1}] \cdot \ov{\bm{\om}}_{\G_1} d\g.
\end{align}
Define the auxiliary function
\begin{align*}
{\bm\Psi}_2(x,t)=\int_t^\xi\pa_{\tau}\bm U(x,\tau)d\tau,\quad x\in\Om_1,0\leq t\leq\xi.
\end{align*}
Similarly as in the derivation of \eqref{3.30}-\eqref{3.31}, we conclude by integration by parts that
\begin{align}\label{3.30+}
&\Rt\int_0^\xi\int_{\Om_1}\vep\pa_t^2(\pa_t\bm U)\cdot\ov{{\bm\Psi}}_2dxdt
=\frac{1}{2}\|\sqrt{\vep}\pa_t\bm U(\cdot,\xi)\|_{L^2(\Om_1)^3}^2,\\ \label{3.30++}
&\Rt\int_0^\xi\int_{\Om_R}\mu_e^{-1}(\na\ti(\pa_t\bm U))\cdot(\na\ti\ov{\bm\Psi}_2)dxdt
=\frac{1}{2}\Big\|\frac{1}{\sqrt{\mu}}\na\ti\bm U(\cdot,\xi)\Big\|_{L^2(\Om_1)^3}^2.
\end{align}
Choosing the test function $\bm\om={\bm\Psi}_2$ in \eqref{variation_td1}, integrating the resulting equation
with respect to $t$ from $t=0$ to $t=\xi$ and taking the real parts yield
\begin{align}\label{3.37+}
&\frac{1}{2}\|\sqrt{\vep}\pa_t\bm U(\cdot,\xi)\|_{L^2(\Om_1)^3}^2
+\frac{1}{2}\Big\|\frac{1}{\sqrt{\mu}}\na\ti\bm U(\cdot,\xi)\Big\|_{L^2(\Om_1)^3}^2\no\\
&=\Rt\int_0^\xi\int_{\G_1}(\hat{\mathscr{T}}-\mathscr{T})[\pa_t^2\bm E^p_{\G_1}]\cdot{\ov{{\bm\Psi}}_2}_{\G_1}d\g dt
- \Rt\int_0^\xi\int_{\G_1} \mathscr{C}[\pa_t\bm U_{\G_1}] \cdot {\ov{{\bm\Psi}}_2}_{\G_1} d\g dt.
\end{align}
Similarly to \eqref{3.38}, it follows from \eqref{3.29} and Lemma \ref{lem_positive1} that
\ben
\Rt\int_0^\xi\int_{\G_1} \mathscr{C}[\pa_t\bm U_{\G_1}] \cdot {\ov{{\bm\Psi}}_2}_{\G_1} d\g dt \ge 0.
\enn
Thus, and by \eqref{3.37+} we have
\begin{align}\label{3.37++}
&\frac{1}{2}\|\sqrt{\vep}\pa_t\bm U(\cdot,\xi)\|_{L^2(\Om_1)^3}^2
+\frac{1}{2}\Big\|\frac{1}{\sqrt{\mu}}\na\ti\bm U(\cdot,\xi)\Big\|_{L^2(\Om_1)^3}^2\no\\
&\le\Rt\int_0^\xi\int_{\G_1} (\hat{\mathscr{T}}-\mathscr{T})[\pa_t^2\bm E^p_{\G_1}]\cdot{\ov{{\bm\Psi}}_2}_{\G_1}d\g dt\no\\
&=\Rt\int_0^\xi\int_{\G_1}\left(\int_0^t(\hat{\mathscr{T}}-\mathscr{T})[\pa_\tau^2\bm E^p_{\G_1}]d\tau\right)
 \pa_t\ov{\bm U}_{\G_1}(x,t) d\g dt\no\\
&=\Rt\int_0^\xi\int_{\G_1}(\hat{\mathscr{T}}-\mathscr{T})[\pa_t^2\bm E^p_{\G_1}]\cdot\ov{\bm U}_{\G_1}(x,\xi)d\g dt
 -\Rt\int_0^\xi\int_{\G_1}(\hat{\mathscr{T}}-\mathscr{T})[\pa_t^2\bm E^p_{\G_1}]\ov{\bm U}_{\G_1}(x,t)d\g dt\no\\
&\le\int_0^\xi \|(\hat{\mathscr{T}}-\mathscr{T})[\pa_t^2\bm E^p_{\G_1}]\|_{H(\Dive,\G_1)}\cdot
\left(\|\bm U(\cdot,\xi)\|_{H(\curl,\Om_1)}+\|\bm U(\cdot,t)\|_{H(\curl,\Om_1)}\right)dt
\end{align}
Combining \eqref{3.39} and \eqref{3.37++} gives
\begin{align}\label{3.51}
&\|\bm U(\cdot,\xi)\|_{L^2(\Om_1)^3}^2+\|\pa_t\bm U(\cdot,\xi)\|_{L^2(\Om_1)^3}^2
  +\|\na\times\bm U(\cdot,\xi)\|_{L^2(\Om_1)^3}^2\no\\
&\les\left(\int_0^\xi\|(\hat{\mathscr{T}}-\mathscr{T})[\pa_t\bm E^p_{\G_1}](\cdot,t)\|_{H^{-1/2}(\Dive,\G_1)}dt\right)
\left(\int_0^\xi\|\bm U(\cdot,t)\|_{H(\curl,\Om_1)} dt\right)\no\\
&\quad+\int_0^\xi \|(\hat{\mathscr{T}}-\mathscr{T})[\pa_t^2\bm E^p_{\G_1}]\|_{H^{-1/2}(\Dive,\G_1)}\cdot
\left(\|\bm U(\cdot,\xi)\|_{H(\curl,\Om_1)} + \|\bm U(\cdot,t)\|_{H(\curl,\Om_1)}\right)dt.
\end{align}
Taking the $L^{\ify}$-norm of both sides of \eqref{3.51} with respect to $\xi$ and using the Young inequality yield
\ben
&\|\bm U\|_{L^{\ify}(0,T;L^2(\Om_1)^3)}^2+\|\pa_t\bm U\|_{L^{\ify}(0,T;L^2(\Om_1)^3)}^2
+ \|\na\times\bm U\|_{L^{\ify}(0,T;L^2(\Om_1)^3)}^2\\
&\les T^2\|(\hat{\mathscr{T}}-\mathscr{T})[\pa_t\bm E^p_{\G_1}]\|^2_{L^1(0,T;H^{-1/2}(\Dive,\G_1))}
+\|(\hat{\mathscr{T}}-\mathscr{T})[\pa_t^2\bm E^p_{\G_1}]\|^2_{L^1(0,T;H^{-1/2}(\Dive,\G_1))},
\enn
which, together with the Cauchy-Schwartz inequality, implies that
\begin{align}\label{U_estimate}
&\|\bm U\|_{L^{\ify}(0,T;L^2(\Om_1)^3)}+\|\pa_t\bm U\|_{L^{\ify}(0,T;L^2(\Om_1)^3)}
+ \|\na\times\bm U\|_{L^{\ify}(0,T;L^2(\Om_1)^3)}\\\no
&\les T^{3/2} \|(\hat{\mathscr{T}}-\mathscr{T})[\pa_t\bm E^p_{\G_1}]\|_{L^2(0,T;H^{-1/2}(\Dive,\G_1))}
+ T^{1/2} \|(\hat{\mathscr{T}}-\mathscr{T})[\pa_t^2\bm E^p_{\G_1}]\|_{L^2(0,T;H^{-1/2}(\Dive,\G_1))}.
\end{align}
We now only need to estimate the right-hand term of \eqref{U_estimate}.
By \eqref{error_beta} and the definition of $\hat{\mathscr{T}}$ (see \eqref{tupml_reduced}) we know that
$(\hat{\mathscr{T}}-\mathscr{T})[\pa_t\bm E^p_{\G_1}]=\n_1\times\mu^{-1}\na\ti\bm v$,
where $\bm v$ satisfies the problem
\begin{equation}\label{PML_system}
  \begin{cases}
    \na\times(\mu^{-1}\mathbb{BA}\na\times\bm{v}) +\vep(\mathbb{BA})^{-1}\pa_t^2 \bm{v} = \0 & \gin\;\;\;\Om^{\PML}\times(0,T),\\
    \n_1\times\bm{v} =\0 & \on\;\;\;\G_1\times(0,T),\\
    \n_2\times\bm{v} =\g_t(\mathbb{B}\widetilde{\bm E}^p) & \on\;\;\;\G_2\times(0,T),\\
    \bm v(x,0)=\pa_t\bm v(x,0) = \0 & \gin\;\;\;\Om^{\PML}.
  \end{cases}
\end{equation}
Thus we deduce that
\ben
\|(\hat{\mathscr{T}}-\mathscr{T})[\pa_t\bm E^p_{\G_1}]\|^2_{L^2(0,T;H^{-1/2}(\Dive,\G_1))}
&= \|\n_1\times\mu^{-1}\na\ti\bm v\|^2_{L^2(0,T;H^{-1/2}(\Dive,\G_1))}\\
&\le e^{2s_1T}\int_0^\ify e^{-2s_1t}\|\mu^{-1}\na\ti\bm v\|^2_{H(\curl,\Om^{\PML})}dt\\
&\les e^{2s_1T} \int_{-\ify}^\ify\|\mu^{-1}\na\ti\ch{\bm v}\|^2_{H(\curl,\Om^{\PML})}ds_2.
\enn
Repeating \eqref{curl_om}-\eqref{ep_curl} yields
\ben
&\|(\hat{\mathscr{T}}-\mathscr{T})[\pa_t\bm E^p_{\G_1}]\|_{L^2(0,T;H^{-1/2}(\Dive,\G_1))}\\
&\quad\les e^{s_1T} s_1^{-4}d^2(1+s_1^{-1}\sig_0)^{15}e^{-\frac{\sqrt{\vep\mu}\sig_0d}{m+1}}
  \left[\int_{-\ify}^{\ify}\sum_{l=0}^8\|s^{l}\ch{\bm J}\|^2_{L^2(\Om_1)^3} ds_2\right]^{1/2}\\
&\quad\les e^{s_1T} s_1^{-4}d^2(1+s_1^{-1}\sig_0)^{15}e^{-\frac{\sqrt{\vep\mu}\sig_0d}{m+1}}
  \|\bm J\|_{H^{8}(0,T;L^2(\Om_1)^3)}.
\enn
Similarly, we have
\ben
\|(\hat{\mathscr{T}}-\mathscr{T})[\pa_t^2\bm E^p_{\G_1}]\|_{L^2(0,T;H^{-1/2}(\Dive,\G_1))}
\les e^{s_1T} s_1^{-4}d^2(1+s_1^{-1}\sig_0)^{15}e^{-\frac{\sqrt{\vep\mu}\sig_0d}{m+1}}
\|\bm J\|_{H^{9}(0,T;L^2(\Om_1)^3)}.
\enn
By \eqref{U_estimate} and the above two estimates it follows on setting $s_{1}=1/T$ and $m=1$ that
\ben
&\|\bm U\|_{L^{\ify}(0,T;L^2(\Om_1)^3)}+\|\pa_t\bm U\|_{L^{\ify}(0,T;L^2(\Om_1)^3)}
+ \|\na\times\bm U\|_{L^{\ify}(0,T;L^2(\Om_1)^3)}\\
&\les T^{11/2}d^2(1+\sig_0T)^{15}e^{-\sqrt{\vep\mu}\sig_0d/2}\|\bm J\|_{H^{9}(0,T;L^2(\Om_1)^3)}.
\enn
From this, the definition of $\bm U$ and Maxwell's system \eqref{eqn_U_V}
the required estimate \eqref{Linfinity_estimate} then follows. The proof is thus complete.
\end{proof}

\begin{remark}\label{re4.8} {\rm
The $L^2$-norm error estimate \eqref{error_estimate} can also be obtained by integrating \eqref{3.51}
with respect to $\xi$ from $0$ to $T$. The idea of using the uniform coercivity of the variational form
in our proof of the $L^2$-norm error estimate \eqref{error_estimate} is also known for the time-harmonic
PML method. This builds a connection between our proposed time-domain PML method with the real coordinate
stretching technique and the time-harmonic PML method in some sense.
}
\end{remark}

\section{Conclusions}\label{sec_conclusion}

In this paper, by using the real coordinate stretching technique we proposed a uniaxial PML method
in the Cartesian coordinates for 3D time-domain electromagnetic scattering problems, which is of advantage
over the spherical one in dealing with scattering problems involving anisotropic scatterers.
The well-posedness and stability estimates of the truncated uniaxial PML problem in the time domain were
established by employing the Laplace transform technique and the energy argument.
The exponential convergence of the uniaxial PML method was also proved in terms of the thickness
and absorbing parameters of the PML layer, based on the error estimate between the EtM operators
for the original scattering problem and the truncated PML problem established in this paper via
the decay estimate of the dyadic Green's function.

Our method can be extended to other electromagnetic scattering problems such as scattering
by inhomogeneous media or bounded elastic bodies as well as scattering in a two-layered medium.
It is also interesting to study the spherical and Cartesian PML methods for time-domain
elastic scattering problems, which is more challenging due to the existence of shear and compressional
waves with different wave speeds. We hope to report such results in the near future.

\section*{Acknowledgments}

This work was partly supported by the NNSF of China grants 11771349 and 91630309.
The first author was also partly supported by the National Research Foundation of Korea
(NRF-2020R1I1A1A01073356).

\end{document}